\documentclass[12pt]{amsart}
\usepackage{latexsym}
\usepackage{amssymb, amsmath,mathrsfs, mathtools}
\usepackage[left=2.5cm,right=2.5cm,top=2.5cm, bottom=2.5cm ]{geometry}
\usepackage{todonotes}
\usepackage{array}
\usepackage{setspace}
\usepackage{graphicx, subfigure}
\usepackage{caption}
\usepackage{tikz}
\usepackage{pgfplots}
\pgfplotsset{compat=1.18}
\usepgfplotslibrary{fillbetween}
\newcommand\beq{\begin{equation}}

\newcommand\eeq{\end{equation}}

\usepackage{float}
\makeatletter
\@namedef{subjclassname@2020}{%
  \textup{2020} Mathematics Subject Classification}

\usepackage[pagebackref,hypertexnames=false, colorlinks, citecolor=red, linkcolor=red]{hyperref}

\newcommand{\McC}{\raise.5ex\hbox{c}}

\newcommand{\D}{\mathbb{D}}

\newcommand\bbm{\begin{bmatrix}}
\newcommand\ebm{\end{bmatrix}}

\newtheorem{theorem}{Theorem}[section]

\newtheorem{lemma}[theorem]{Lemma}

\newtheorem*{theorem*}{Theorem}
\newtheorem*{conjecture*}{Conjecture}
\newtheorem{corollary}[theorem]{Corollary}
\newtheorem*{corollary*}{Corollary}

\newtheorem*{proposition*}{Proposition}

\def\bb{\begin{color}{blue}}
\def\bg{\begin{color}{green}}
\def\br{\begin{color}{red}}
\def\eg{\end{color}}
\def\er{\end{color}}
\def\eb{\end{color}}

\theoremstyle{remark}
\newtheorem{remark}[theorem]{Remark}
\newtheorem{question}{Question}
\newtheorem{definition}[theorem]{Definition}
\newtheorem{example}[theorem]{Example}

\author[Bickel]{Kelly Bickel$^\dagger$}
\address{Department of Mathematics \& Statistics, Bucknell University, Lewisburg, PA 17837, USA.}
\email{kelly.bickel@bucknell.edu}
\thanks{$\dagger$ Research supported in part by National Science Foundation DMS grant \#2000088.}

\author[Pascoe]{J. E. Pascoe$^\ddagger$}
\address{Department of Mathematics, Drexel University, Philadelphia, PA 19104, USA}
\email{jep362@drexel.edu}
\thanks{$\ddagger$ Research supported in part by National Science Foundation DMS grant \#1953963.}

\author[Tully-Doyle]{Ryan Tully-Doyle$^*$}
\address{Department of Mathematics, Cal Poly SLO, 1 Grand Ave, San Luis Obispo, CA 93410, USA}
\email{rtullydo@calpoly.edu}
\thanks{$*$ Research supported in part by National Science Foundation DMS grant \#2055098}

\keywords{automorphic Nelson dilation, Sz.-Nagy dilation theorem, invariant subspace problem}
 \subjclass[2020]{47A20, 47A15, 47A56, 15A18}
\begin{document}
\title[Automorphic Nelson Dilation]{Automorphic Nelson Dilations for Contractions and Invariant Subspace Tracking}
\date{\today}
\begin{abstract}
Given an $n \times n$ strictly contractive matrix $T$, an (automorphic) Nelson dilation $\widehat{T}$ of $T$ is a certain type of analytic matrix-valued function on the unit disk with $\widehat{T}(0) = T$.  Its construction gives a method for lifting a matrix to a matrix-valued function with nice boundary behavior, a trick that has proved useful in recent operator theoretic developments. In this paper, we show that Nelson dilations give a quick way to obtain the minimal isometric and unitary dilations of $T$ and thus, connect naturally to the classical Sz.-Nagy dilation theory. We then initiate the study of the automorphic Nelson dilations as a fundamental object in their own right and prove that every $T$  has Nelson dilations $\widehat{T}$ with particularly useful/interesting properties; for example, they either have strongly entangled eigenvalue functions or have reducing subspaces that are independent of $z$.  Along the way, we examine when the product of an invertible matrix and a diagonal matrix has distinct eigenvalues.
\end{abstract}
\maketitle


\section{Introduction} \label{sec:intro}

Let $T$ be an $n \times n$ matrix with $\| T\| <1$. Then $T$ is a strict contraction on $\mathbb{C}^n$ and has a singular value decomposition 
\begin{equation} \label{eqn:svd} T = U \begin{bmatrix} s_1 && \\
& \ddots & \\
&& s_n \end{bmatrix}  V^{*},\end{equation}
with $U, V$ unitary and (phased) singular values $s_1, \dots, s_n$ in the unit disk $\mathbb{D}$. Given decomposition \eqref{eqn:svd}, the associated  \textbf{classical Nelson dilation of $T$} is the matrix-valued function 
 \begin{equation} \label{eqn:cnd} \widetilde{T}(z_1, \dots, z_n):= U \begin{bmatrix} z_1 && \\
& \ddots & \\
&& z_n \end{bmatrix}  V^{*}, \end{equation}
where the singular values are replaced by independent complex variables $z_1, \dots, z_n \in \overline{\mathbb{D}}$ and $\widetilde{T}(s_1, \dots, s_n ) = T$.  Such Nelson dilations have proved to be an important tool in the development of both dilation theory and more general operator theory pursuits. Indeed, Nelson used these dilations in \cite{nelson} to
 provide a short and elegant proof of von Neumann's inequality. Nelson's proof techniques also motivated recent work by Hartz in \cite{hartz} 
 on a multivariable version of von Neumann's inequality for weighted shifts. 

Now let us consider a related type of dilation of $T$. Specifically, let $b_1, \dots, b_n$ be automorphisms of $\mathbb{D}$ (then, each $b_i(z)  = \lambda_i \left(\frac{z+a_i}{1+ \bar{a}_i z}\right)$ for some $\lambda_i$ in the unit circle $\mathbb{T}$ and $a_i \in \mathbb{D}$) with $b_i(0) = s_i$ for $i =1, \dots, n$. Then the matrix-valued function 
 \begin{equation} \label{eqn:nd} \widehat{T}(z):= U \begin{bmatrix} b_{1}(z) && \\
& \ddots & \\
&& b_n(z) \end{bmatrix}  V^{*} \end{equation}
 satisfies $\widehat{T}(0) = T$. We will call $\widehat{T}(z)$ an (automorphic) \textbf{Nelson dilation of $T$}.  In \cite{mp}, the second author and McCullough introduced and used this automorphic version of Nelson's trick to study dilation  theory for contractions, the quantum annulus, and many other domains. The general principle at work in \cite{ hartz, mp, nelson} is to extend boundary estimates on an analytic family with good boundary values to the interior by invoking some kind of maximum principle. Historically, Paulsen gives a finite dimensional version of Nelson's argument in \cite[Exercise 2.16]{paulsen} and attributes it to Wermer. Pisier uses the argument in \cite{pisier} to prove von Neumman's inequality and attributes it to Nelson via communication with Paulsen and Wermer.

In this paper, we identify and develop core properties of (automorphic) Nelson dilations as in \eqref{eqn:nd}, with a particular interest in the relationship with the classical theory of isometric/unitary dilations and the analytic properties of the resulting family of matrices $\widehat{T}(z)$.  To connect Nelson dilations with the  classical dilation theory for Hilbert space contractions, we need several key definitions. First, let $S$ be a bounded linear operator on a  Hilbert space $\mathcal{H}$, denoted $S \in B(\mathcal{H})$, with $\| S\| \le 1$. Then a unitary (resp. isometry) $V$ on a Hilbert space $\mathcal{K}$ is a \textbf{unitary} (resp. isometric) \textbf{dilation}\footnote{In the literature, this is sometimes called a unitary (resp. isometric) \textbf{power dilation} of $S$.} of $S$ if $\mathcal{H} \subseteq \mathcal{K}$ and
\begin{equation} \label{eqn:dilation} S^k = P_{\mathcal{H}} V^k |_{\mathcal{H}} \quad \text{ for all } k \in \mathbb{N}.\end{equation}
An isometric dilation $V\in B(\mathcal{K})$ of a contraction $S \in B(\mathcal{H})$ is called \textbf{minimal} if $\mathcal{K}$ is the minimal, closed invariant subspace of $V$ that contains $\mathcal{H}$, namely:
\[ \mathcal{K} =  \overline{ \text{span}} \left \{ V^m h : h \in \mathcal{H} , m \in \mathbb{N} \right\}.\] 
Meanwhile, a unitary dilation $V\in B(\mathcal{K})$ of a contraction $S \in B(\mathcal{H})$ is called \textbf{minimal} if $\mathcal{K}$ is the minimal, reducing subspace of $V$ (closed and invariant under both $V$ and $V^*$, which in this case equals $V^{-1}$) that contains $\mathcal{H}$, namely:
\[ \mathcal{K} =  \overline{ \text{span}} \left \{ V^m h : h \in \mathcal{H} , m \in \mathbb{Z} \right\}. \] 
The classical work of Sz.-Nagy \cite{nagy53, nagyfoias2010}  identified the unique (up to unitary equivalence) minimal isometric and unitary dilations of a general contraction $S$ and has played a crucial role in the development of one and multi-variable operator theory. See the readable survey paper \cite{shalit21} and comprehensive book \cite{nagyfoias2010} for more information about this beautiful theory. Halmos also considered the unitary dilation problem in \cite{halmos}, where he gives an argument via a $2 \times 2$ block matrix completion.  

In Section \ref{minimality} of this paper, we observe that for an $n\times n$ strictly contractive matrix $T$,  the Nelson dilations of $T$ give rise to concrete and natural formulas for minimal isometric and unitary dilations of $T$. To state the result, 
let $H^2(\mathbb{D})$ denote the standard Hardy space on unit disk $\mathbb{D}$, let $L^2(\mathbb{T})$ denote the standard $L^2$ space on the unit circle $\mathbb{T}$, let $H^2_n$ denote the vector-valued Hardy space $H^2_n:=H^2(\mathbb{D}) \otimes \mathbb{C}^n$, and let ${L_n^2}$ denote the vector-valued  $L^2$ space $L^2_n:=L^2(\mathbb{T}) \otimes \mathbb{C}^n$. A direct study of the Nelson dilation construction gives the following:

\begin{theorem} \label{thm:minimal} Let $T$ be an $n\times n$ matrix with $\| T\|<1$ and let $M_{\widehat{T}}$ denote multiplication by a Nelson dilation $\widehat{T}$ of $T$ as in \eqref{eqn:nd}. Then, $M_{\widehat{T}}|_{H_n^2}$ is a minimal isometric dilation of $T$ and $M_{\widehat{T}}|_{L_n^2}$  is a minimal unitary dilation of $T.$
\end{theorem}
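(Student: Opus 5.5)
Throughout, $\mathbb{C}^n$ is identified with the constant functions in $H^2_n \subseteq L^2_n$. The plan has three parts: unitarity/isometry of the multiplier, the compression identity, and minimality.

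\textbf{Unitarity and isometry.} For $z\in\mathbb{T}$ each automorphism satisfies $|b_i(z)|=1$. Hence $\widehat{T}(z)=U\,\mathrm{diag}(b_i(z))\,V^*$ is a unitary matrix for every $z\in\mathbb{T}$, and it is continuous on $\overline{\mathbb{D}}$. It follows that $M_{\widehat{T}}$ is unitary on $L^2_n$, with inverse multiplication by $\widehat{T}(z)^{*}=\widehat{T}(z)^{-1}$. Since $\widehat{T}$ is analytic on $\mathbb{D}$, the space $H^2_n$ is invariant, so $M_{\widehat{T}}|_{H^2_n}$ is an isometry.

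\textbf{Compression identity.} Let $h\in\mathbb{C}^n$. Then $M_{\widehat{T}}^k h=\widehat{T}(z)^k h$, an $H^2_n$ function. Compressing to the constants is evaluation at $0$, via the mean value property (equivalently, the $0$th Fourier coefficient). This gives
\[
P_{\mathbb{C}^n}M_{\widehat{T}}^k h=\widehat{T}(0)^k h=T^k h, \qquad k\ge 0 .
\]
So both restrictions are dilations of $T$.

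\textbf{Minimality of the isometric dilation.} This is the main step. Set $W=V^*$ and $\Phi(z)=\widehat{T}(z)^m$. It suffices to show that the span of $\{\widehat{T}(z)^m h : m\ge0,\ h\in\mathbb{C}^n\}$ is dense in $H^2_n$. I would argue by orthogonality. Suppose $f\in H^2_n$ is orthogonal to all $\widehat{T}^m h$. Then
\[
\int_{\mathbb{T}} \widehat{T}(z)^{*m} f(z)\,dm(z)=0 \quad\text{for all } m\ge 0 .
\]
Equivalently, $P_{\mathbb{C}^n}(M_{\widehat{T}}^{*})^{m} f=0$, i.e.\ $P_{\mathbb{C}^n}(M_{\widehat{T}}|_{H^2_n})^{*m}f=0$ for all $m$. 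I would exploit the structure directly: multiplication by $\widehat{T}$ on $H^2_n$ has wandering subspace $\mathcal{L}=H^2_n\ominus \widehat{T}H^2_n$. Since $\widehat{T}=U B V^*$ with $B=\mathrm{diag}(b_i)$, we get $\widehat{T}H^2_n=U\,(\bigoplus b_iH^2)$. Because each $b_i$ is a single Blaschke factor vanishing at $-a_i$, the codimension of $\widehat{T}H^2_n$ is exactly $n$. Thus $\mathcal{L}$ is $n$-dimensional and equals $U\cdot\mathrm{span}\{k_{-a_i}e_i\}$, where $k_{w}$ denotes the Szeg\H{o} kernel at $w$.

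By Wold, $H^2_n=\bigoplus_{m\ge0}\widehat{T}^m\mathcal{L}$, since $M_{\widehat{T}}$ is a pure isometry: $\bigcap_m \widehat{T}^mH^2_n=0$, as each $b_i^m H^2$ shrinks to $0$. Minimality then reduces to showing $\mathcal{L}\subseteq\overline{\mathrm{span}}\{\widehat{T}^m\mathbb{C}^n\}$. The key observation is that $\mathbb{C}^n\cap \widehat{T}H^2_n=\{0\}$. Indeed, if a constant $c$ lies in $\widehat{T}H^2_n$, then $U^*c\in\bigoplus b_iH^2$, and the $i$th coordinate is a constant vanishing at $-a_i$, hence zero. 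Both $\mathbb{C}^n$ and $\mathcal{L}$ have dimension $n$, so $P_{\mathcal{L}}|_{\mathbb{C}^n}$ is injective, hence bijective onto $\mathcal{L}$.

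Now induct. Let $\mathcal{M}=\overline{\mathrm{span}}\{\widehat{T}^m\mathbb{C}^n : m\ge 0\}$, which is $M_{\widehat{T}}$-invariant. Write each constant $c=P_{\mathcal{L}}c+\widehat{T}g$ with $g\in H^2_n$. I expect the cleanest route to be a compact alternative. Suppose $f\perp\mathcal{M}$. Then $M_{\widehat{T}}^*f\perp \mathcal{M}$ as well, because
\[
\langle M_{\widehat{T}}^* f,\widehat{T}^m c\rangle=\langle f,\widehat{T}^{m+1}c\rangle=0 .
\]
So $\mathcal{M}^\perp$ is $M_{\widehat{T}}^*$-invariant and orthogonal to $\mathbb{C}^n$. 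Suppose $f\ne0$ in $\mathcal{M}^\perp$. Purity gives $f\not\in\widehat{T}^{N+1}H^2_n$ for some $N$; take the minimal $N$ with $f\in\widehat{T}^NH^2_n\setminus\widehat{T}^{N+1}H^2_n$. Then $f_0:=M_{\widehat{T}}^{*N}f\in\mathcal{M}^\perp$, and $f_0$ has a nonzero $\mathcal{L}$-component. But $f_0\perp\mathbb{C}^n$ and $P_{\mathcal{L}}\mathbb{C}^n=\mathcal{L}$, which forces the $\mathcal{L}$-component of $f_0$ to be orthogonal to $\mathcal{L}$ modulo the $\widehat{T}H^2_n$ parts. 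Closing this final orthogonality bookkeeping is the delicate point. I would handle it by working instead with the equivalent condition $\overline{\mathrm{span}}\{\widehat{T}^m\mathbb{C}^n\}\supseteq \mathbb{C}^n+\widehat{T}\mathcal{M}$ and iterating: $\mathcal{M}\supseteq \mathbb{C}^n+\widehat{T}\mathbb{C}^n+\cdots$, whose projection onto each $\widehat{T}^k\mathcal{L}$ is onto. Combining this with an upper-triangular (block) argument in the Wold decomposition gives that $\mathcal{M}$ contains $\bigoplus_{k\le N}\widehat{T}^k\mathcal{L}$ for every $N$, hence $\mathcal{M}=H^2_n$.

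\textbf{Minimality of the unitary dilation.} With the isometric case done, the reducing subspace generated by $\mathbb{C}^n$ contains $H^2_n$. It is also invariant under $M_{\widehat{T}}^{-1}=M_{\widehat{T}^*}$, so it contains $\widehat{T}^{-m}H^2_n$ for all $m\ge0$. Since $M_{\widehat{T}}$ on $L^2_n$ is the minimal unitary extension of the pure isometry $M_{\widehat{T}}|_{H^2_n}$, we have $\bigcup_m\widehat{T}^{-m}H^2_n$ dense in $L^2_n$. Concretely, $\widehat{T}^{-m}H^2_n=U\bigoplus_i\overline{b_i}^{\,m}H^2$, and $\bigcup_m \overline{b_i}^{\,m}H^2$ is dense in $L^2$: composing with $b_i^{-1}$ reduces this to $\bigcup_m \bar z^mH^2$ via the unitary weighted composition operator. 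This completes the proof.
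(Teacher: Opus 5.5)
The unitarity, isometry and compression steps are fine, and so are your identification $\mathcal{L}=U\,\mathrm{span}\{k_{-a_i}e_i\}$ and the bijectivity of $P_{\mathcal L}|_{\mathbb C^n}$. The gap is in isometric minimality, and the ``upper-triangular'' completion you propose does not work.

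Bijectivity of $P_{\mathcal L}$ on the generating subspace, together with surjectivity of each finite stage onto $\bigoplus_{k\le N}\widehat T^k\mathcal L$, does not give $\mathcal L\subseteq\mathcal M$. Solving the triangular system leaves a remainder in $\widehat T^{N+1}H^2_n$, and its norm need not tend to $0$. Concretely, take $M_z$ on $H^2$ and $h=z-\tfrac12$. Then $P_{\mathcal L}$ is bijective on $\mathrm{span}\{h\}$, and $\mathrm{span}\{h,zh,\dots,z^Nh\}$ projects onto the polynomials of degree at most $N$ for every $N$. Yet $\overline{\mathrm{span}}\{z^mh\}=b_{1/2}H^2\neq H^2$. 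So you need something specific to $\mathbb C^n$. There are two ways to supply it.

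First, the paper's route: since $|a_i|<1$, one can solve $\frac{1}{1+\bar a_i z}=c_1+c_2\,b_i(z)$. This gives $U k_{-a_i}e_i=c_1\,Ue_i+c_2\,\widehat T(z)\,Ve_i\in\mathbb C^n+\widehat T\,\mathbb C^n\subseteq\mathcal M$, so $\mathcal L\subseteq\mathcal M$ directly, and the Wold decomposition finishes.

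Second, your orthogonality argument closes once you use that $\mathbb C^n$ is co-invariant. For constant $c$ we have $M_{\widehat T}^*c=T^*c$, because $\langle \widehat T g,c\rangle=\langle Tg(0),c\rangle$. Then for $f_0\in\mathcal M^\perp$ and $c\in\mathbb C^n$, writing $I=P_{\mathcal L}+M_{\widehat T}M_{\widehat T}^*$,
\[
0=\langle f_0,c\rangle=\langle P_{\mathcal L}f_0,P_{\mathcal L}c\rangle+\langle M_{\widehat T}^*f_0,\,T^*c\rangle=\langle P_{\mathcal L}f_0,P_{\mathcal L}c\rangle ,
\]
since $M_{\widehat T}^*f_0\in\mathcal M^\perp\perp\mathbb C^n$. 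As $P_{\mathcal L}\mathbb C^n=\mathcal L$, this forces $P_{\mathcal L}f_0=0$. Applying this to every $M_{\widehat T}^{*m}f$ kills all Wold components of $f$, so $f=0$.

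The unitary step also needs repair. The formula $\widehat T^{-m}H^2_n=U\bigoplus_i\overline{b_i}^{\,m}H^2$ is false. With $B=\mathrm{diag}(b_i)$, $\widehat T^m=(UBV^*)^m\neq UB^mV^*$ unless $V^*U$ commutes with $B$, and even for $m=1$ the correct space is $V\bigoplus_i\overline{b_i}H^2$. Your purity justification via ``$b_i^mH^2$ shrinks'' has the same flaw; use $\|\widehat T(z)\|<1$ on $\mathbb D$ instead. Moreover, asserting that $M_{\widehat T}|_{L^2_n}$ is the minimal unitary extension of $M_{\widehat T}|_{H^2_n}$ is exactly the density you are trying to prove.

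Your strategy can be completed as follows. Let $\mathcal R$ be the reducing subspace generated by $\mathbb C^n$. Its orthocomplement $\mathcal R^\perp$ lies in $\mathcal N=L^2_n\ominus H^2_n$ and is mapped onto itself by $M_{\widehat T}^{-1}=M_{\widehat T^*}$, which restricts to an isometry $W$ of $\mathcal N$. Under $f(z)\mapsto\bar z f(\bar z)$, $W$ becomes multiplication on $H^2_n$ by $V\,\mathrm{diag}(\overline{b_i(\bar z)})\,U^*$. This is a Nelson dilation of $T^*$, so $W$ is pure, and therefore $\mathcal R^\perp\subseteq\bigcap_m W^m\mathcal N=\{0\}$. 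This reflection to a Nelson dilation of $T^*$ is exactly the ingredient the paper uses, there by approximating $g(\bar z)$ with sums $\sum_\ell \widehat T(z)^{-\ell}x_\ell$.
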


Meanwhile, in Section \ref{fractured}, we study specific types of Nelson dilations. In particular, given a Nelson dilation $\widehat{T}$ as in \eqref{eqn:nd}, let $\mathbb{D}_T$ denote the largest open disk centered at $0$ where each  automorphism $b_i$ is analytic. Then $\overline{\mathbb{D}} \subseteq \mathbb{D}_T$ and the entries of $\widehat{T}$ are all analytic on $\mathbb{D}_T$.  The set of eigenvalues of $\widehat{T}(z)$, denoted $\sigma(\widehat{T}(z))$,  are the solutions to the algebraic equation $\det ( x I - \widehat{T}(z))=0$ and thus, form an $n$-valued, analytic function on $\mathbb{D}_T$. On neighborhoods where the eigenvalues are distinct,
$\sigma(\widehat{T}(z))$ can be parameterized by $n$ separately-analytic functions, but when eigenvalues come together, more
 pathological eigenvalue/eigenvector behavior can occur. Thus we define the \textbf{exceptional set} of $\widehat{T}$, denoted $S_{\widehat{T}}$, to be 
 \begin{equation} \label{eqn:exceptional} S_{\widehat{T}} := \left \{ z \in \mathbb{D}_T: \widehat{T}(z) \text{ has a repeated eigenvalue} \right\}\end{equation}
and say that a Nelson dilation $\widehat{T}$ is \textbf{fractured} if  $S_{\widehat{T}}$ is finite. In Section \ref{fractured}, we prove the following:

\begin{theorem} \label{thm:fractured} Every square matrix $T$ with $\| T \| <1$ has a fractured Nelson dilation $\widehat{T}$. 
\end{theorem}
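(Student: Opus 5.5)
We use the discriminant. For a Nelson dilation $\widehat{T}(z)=UB(z)V^*$ with $B(z)=\mathrm{diag}(b_1(z),\dots,b_n(z))$, let $\Delta(z)$ be the discriminant of the characteristic polynomial $\det(xI-\widehat{T}(z))$. Since the coefficients of this polynomial are analytic on $\mathbb{D}_T$, so is $\Delta$, and $S_{\widehat{T}}=\{z\in\mathbb{D}_T:\Delta(z)=0\}$. If $\Delta\not\equiv 0$, its zeros in any compact subset of $\mathbb{D}_T$ are finite. To obtain $S_{\widehat{T}}$ finite, it is cleaner to work with all of $\overline{\mathbb{D}}$ and beyond. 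Each $b_i$ is a Möbius map, so after clearing the denominators $\prod_i(1+\bar a_i z)$, the quantity $\Delta(z)\prod_i(1+\bar a_i z)^{N}$ (for suitable $N$) is a polynomial in $z$. Its zero set is therefore finite unless it vanishes identically. It thus suffices to choose the automorphisms so that $\Delta\not\equiv0$, i.e.\ so that $\widehat{T}(z_0)$ has distinct eigenvalues for a single point $z_0$.

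The free parameters are as follows. The constraint $b_i(0)=s_i$ still allows the disk automorphisms with $b_i(0)=s_i$ to range over a one-real-parameter family for each $i$, namely $b_i=\varphi_{s_i}\circ(\mu_i\,\cdot)$ with $\mu_i\in\mathbb{T}$, where $\varphi_s(w)=(w+s)/(1+\bar s w)$. Write $W=V^*U$, which is unitary and hence invertible. Then $\widehat{T}(z)$ is similar to $B(z)W$, so the spectral question becomes one about the product of an invertible matrix and a diagonal matrix, exactly as foreshadowed in the abstract. For $z$ on $\mathbb{T}$ the values $b_i(z)$ sweep out all of $\mathbb{T}$. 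Taking $z_0\in\mathbb{T}$ and adjusting the $\mu_i$, the diagonal entries $d_i=b_i(z_0)$ can be made arbitrary points of $\mathbb{T}^n$. Hence it suffices to prove the key lemma: \emph{for any invertible $W$, there is $D=\mathrm{diag}(d_1,\dots,d_n)$ with $d_i\in\mathbb{T}$ such that $DW$ has distinct eigenvalues.}

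For the key lemma, the discriminant of $\det(xI-DW)$ is a polynomial in $(d_1,\dots,d_n)$. A polynomial vanishing on all of $\mathbb{T}^n$ vanishes identically, so it suffices to find some $D\in\mathbb{C}^n$ (arbitrary complex diagonal) making $DW$ have simple spectrum. I would argue by induction on $n$ using a permutation expansion: since $\det W\neq0$, some permutation $\pi$ has $\prod_i w_{i\pi(i)}\neq0$. The main obstacle is this step. A first attempt is a degeneration argument. Put $d_i=t^{k_i}$ with widely separated exponents $k_1\ll k_2\ll\dots$ and study the eigenvalues of $DW$ as $t\to\infty$ via Newton polygons. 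The coefficient of $x^{n-m}$ is $\sum_{|I|=m}\prod_{i\in I}d_i\det W_{I,I}$, where $W_{I,I}$ are the principal minors. If the leading principal minors $\det W_{[m],[m]}$ (after reindexing) are all nonzero, the eigenvalues separate at rates $t^{k_n},t^{k_{n-1}},\dots$ and are distinct for large $t$. The difficulty is that principal minors may vanish, as in permutation-like $W$ (e.g.\ $W=\begin{bmatrix}0&1\\1&0\end{bmatrix}$). Such cases I would handle separately: first replace $W$ by a generic perturbation realized within the family, or reduce to cyclic blocks of $\pi$. For a cycle, $DP_\pi$ has characteristic polynomial $x^k-\prod d_i$, which has distinct roots whenever $\prod d_i\neq0$. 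I would then combine the blocks by choosing the scalings so that the spectra of different blocks are disjoint, and finally perturb off the permutation support using openness of the simple-spectrum condition.
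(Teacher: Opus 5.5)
Your reduction is correct and somewhat tidier than the paper's. Clearing the denominators $\prod_i(1+\bar a_i z)$ from the $z$-discriminant means a single point $z_0$ with simple spectrum already gives finiteness of $S_{\widehat{T}}$; the paper instead runs a gcd/B\'ezout argument on $p$ and $p_x$. Your $b_i=\varphi_{s_i}(\mu_i\,\cdot)$ is the paper's choice $b_j(1)=\omega_j$. Zariski density of $\mathbb{T}^n$ legitimately reduces the unitary case to finding one complex diagonal $D$ with $DW$ of simple spectrum. But that last statement is the real content of the theorem: it is the result of \cite{CHLS12}, proved in the paper as Theorem~\ref{thm:Vroots}. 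Your sketch does not prove it, so there is a genuine gap.

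The degeneration $d_i=t^{k_i}$ only covers $W$ with a nested chain of nonzero principal minors, and the fallback does not work. Simple spectrum is an open condition, but $DW$ is not a small perturbation of $DW_\pi$ (with $W_\pi$ the entries of $W$ on the support of $\pi$) relative to its own scale. Row $i$ of $D(W-W_\pi)$ carries the same factor $d_i$ as $d_iw_{i\pi(i)}$.

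Adding a diagonal similarity $E^{-1}DWE$ does not rescue this. That gives independent monomial row and column scalings $t^{\phi_i}$, $t^{\epsilon_j}$. Convergence to some $D_0W_\pi$ with $D_0$ invertible then forces $\epsilon_{\pi(i)}>\epsilon_j$ for every $j\ne\pi(i)$ with $w_{ij}\ne0$. That is impossible as soon as two rows have full support.

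Nor can $W$ be ``perturbed within the family'': $D$ is the only parameter. Genericity in $W$ only shows the bad $W$'s form a proper algebraic set, not that they lie in $\{\det W=0\}$.

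Concretely, $W=\begin{bmatrix}1&1&2\\1&1&1\\ \tfrac12&1&1\end{bmatrix}$ has $\det W=\tfrac12$, no zero entries, and all three $2\times2$ principal minors equal to zero. So neither of your mechanisms applies to it.

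The missing idea is the algebraic one in Lemma~\ref{lem:Vfact} and Theorem~\ref{thm:Vroots}. Since $\det W\ne0$, $V(x,\vec d)=\det(xI-DW)$ has degree exactly one in each $d_j$. Hence any repeated factor $g^2\mid V$ in $\mathbb{C}[x,\vec d]$ must be free of all the $d_j$. Setting $\vec d=0$, where $V=x^n$, then makes $g$ a power of $x$. This contradicts $V(0,\vec d)=(-1)^n\det W\,d_1\cdots d_n\not\equiv0$. So $V$ is square-free and monic in $x$, and by Gauss's lemma its $x$-discriminant is a nonzero polynomial in $\vec d$. That is exactly what your reduction needs.
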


Thus, when studying a square matrix $T$ with $\| T\|<1$, one can always find a Nelson dilation $\widehat{T}$ whose exceptional set is finite. Such a fractured Nelson dilation $\widehat{T}$ always possesses at least one of the following nice properties: its eigenvalues (as an $n$-valued function of $z$) are entangled in a natural way or there is a nontrivial subspace $S \subseteq \mathbb{C}^n$ that is reducing for 
$\widehat{T}(z)$ for every $z \in \mathbb{D}_T$. This statement is encoded in the following theorem: 

\begin{theorem} \label{thm:entangled} Let $T$ be an $n\times n$ matrix with $\| T \| <1$ and fractured Nelson dilation $\widehat{T}$. Then at least one of the following must occur:
\begin{itemize}
\item[i.] $\widehat{T}$ is \textbf{eigenvalue entangled} in the sense that its eigenvalue function  $\sigma(\widehat{T}(z))$ cannot be partitioned into nontrivial, continuous $k$-valued and $(n-k)$-valued functions $\Lambda_1(z)$ and $\Lambda_2(z)$ such that $\Lambda_1(z) \cap \Lambda_2(z) = \emptyset$ for all $z\in \mathbb{D}_T.$
\item[ii.] $\widehat{T}$ is \textbf{subspace reducible} in the sense that there is a nontrivial subspace $S \subseteq \mathbb{C}^n$ that is reducing for the matrix $\widehat{T}(z)$ for all $z\in \mathbb{D}_T.$
\end{itemize}
\end{theorem}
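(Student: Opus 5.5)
We argue by contrapositive: assume (i) fails and show (ii) holds. So suppose $\sigma(\widehat{T}(z)) = \Lambda_1(z) \sqcup \Lambda_2(z)$ with $\Lambda_1$ continuous $k$-valued, $\Lambda_2$ continuous $(n-k)$-valued, $0<k<n$, and the two disjoint for every $z \in \mathbb{D}_T$.

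First we build the Riesz projection. Fix $z_0 \in \mathbb{D}_T$. By continuity and disjointness, there is a contour $\Gamma$ separating $\Lambda_1(z)$ from $\Lambda_2(z)$ for all $z$ near $z_0$. Set
\[ P(z) = \frac{1}{2\pi i}\oint_\Gamma (xI - \widehat{T}(z))^{-1}\,dx. \]
This is analytic near $z_0$ and independent of the choice of $\Gamma$. Hence $P$ is a globally defined analytic idempotent-valued function on $\mathbb{D}_T$ of constant rank $k$, commuting with $\widehat{T}(z)$. Its range is the spectral subspace of $\widehat{T}(z)$ for $\Lambda_1(z)$, so it is invariant.

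The main step is to show that $P(z)$ is constant and an orthogonal projection. We use the boundary behavior. For $z\in\mathbb{T}$, each $|b_i(z)|=1$, so $\widehat{T}(z)$ is unitary, hence normal, and its Riesz projections are orthogonal: $P(z)=P(z)^*$ on $\mathbb{T}$. Now $P$ is analytic on $\mathbb{D}_T \supset \overline{\mathbb{D}}$, and on $\mathbb{T}$ we have $P(z)=P(z)^*$. Write $P^\sharp(z) := P(1/\bar z)^*$, which is analytic on a neighborhood of $\mathbb{T}$ and agrees with $P$ on $\mathbb{T}$. By the identity principle $P = P^\sharp$ on an annulus, so the reflection extends $P$ to an analytic function on the Riemann sphere minus possibly nothing. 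More simply, the Fourier coefficients of the entries of $P|_{\mathbb{T}}$ vanish for negative indices by analyticity. Self-adjointness on $\mathbb{T}$ forces the positive-index coefficients to vanish as well. Therefore $P$ is constant on $\overline{\mathbb{D}}$, hence on $\mathbb{D}_T$.

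Let $P$ now denote this constant. It is a self-adjoint idempotent of rank $k$, so it is an orthogonal projection onto a subspace $S$ with $0<\dim S<n$. Since $P$ commutes with $\widehat{T}(z)$ for all $z$, $S$ is reducing for every $\widehat{T}(z)$, which is (ii).

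We expect the main obstacle to be the global well-definedness and analyticity of $P$ on $\mathbb{D}_T$. This is exactly where the hypothesis that $\Lambda_1$ and $\Lambda_2$ are continuous and disjoint everywhere is used. The fractured hypothesis is only needed to make the dichotomy meaningful, since otherwise the partition could still exist. With the hypothesis, the local contour argument patches together because the Riesz projection depends only on the spectral set $\Lambda_1(z)$, not on the contour.
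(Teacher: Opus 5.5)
Your proof is correct, and its skeleton is the paper's. The paper derives the theorem from Remark~\ref{rem:pr} combined with the forward direction of Theorem~\ref{thm:constantPv}. Remark~\ref{rem:pr} is your first step: local Riesz contour integrals patch together into a projection $P_{\Lambda_1}$ that is analytic on $\mathbb{D}_T$.

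The differences are in the second half, and they go in your favour.

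\emph{Constancy.} The paper argues in four steps: $\|P_{\Lambda_1}\|=1$ on $\mathbb{T}$ and $\|P_{\Lambda_1}\|\ge 1$ on $\mathbb{D}$; a maximum norm principle for matrix-valued analytic functions gives $\|P_{\Lambda_1}\|\equiv 1$; a norm-one idempotent is Hermitian; an analytic Hermitian-valued function is constant. You go straight from self-adjointness on $\mathbb{T}$ to the vanishing of the positive Fourier coefficients, which is more elementary. Your reflection sentence is loosely phrased and would need Liouville to finish, but the Fourier argument stands on its own.

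\emph{Reducibility.} The paper first shows the range $S$ is $\widehat{T}(z)$-invariant, by writing $S$ as a span of eigenvectors off the finite exceptional set and passing to limits. It then uses unitarity on $\mathbb{T}$ and a second maximum-principle argument on $P_S\widehat{T}(z)P_{S^\perp}$ to get $\widehat{T}(z)^*$-invariance. You instead observe that the Riesz projection commutes with $\widehat{T}(z)$. Once it is a constant self-adjoint projection, it therefore commutes with $\widehat{T}(z)^*$ as well, and $S$ is reducing at once.

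As a result, your argument never uses the fractured hypothesis and proves the dichotomy for every Nelson dilation. Your closing remark about that hypothesis is therefore unnecessary rather than explanatory.

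What the paper's longer route buys is the standalone equivalence of projection reducibility and subspace reducibility (Theorem~\ref{thm:constantPv}). That result is phrased for fractured dilations because $P_{\Lambda_\ell}$ there is assembled from individual eigenprojections off $S_{\widehat{T}}$.
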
 

If ($i$) fails for a general analytic matrix-valued function $F$, meaning that the eigenvalues of $F$ separate nicely into two functions, then the matrices $F(z)$ should possess nontrivial invariant subspaces $S_z$ that are nicely parameterized by $z$. 
Hence, the noteworthy aspect of Theorem \ref{thm:entangled} is that if a fractured Nelson dilation $\widehat{T}$ has an eigenvalue function that separates nicely, then each matrix $\widehat{T}(z)$ must have a nontrivial invariant subspace $S$ that is both \emph{independent of} $z$ and also \emph{invariant under $\widehat{T}(z)^*$}. 

One can show that properties ($i$) and ($ii$) in Theorem \ref{thm:entangled} are not mutually exclusive, see Example \ref{ex:both}. Thus, in 
Theorem \ref{thm:constantPv}, we also establish a necessary and sufficient condition for $\widehat{T}$ to be subspace reducible called \textbf{projection reducible}, which is discussed in Remark \ref{rem:pr} and contains the negation of \emph{eigenvalue entanged} in ($i$) as a special case.  At the end of Section  \ref{fractured}, we explore these different conditions and results via several examples. 

In Section \ref{sec:fracturing}, we establish the following matrix analysis results, which appear later as Corollaries \ref{cor:MD} and \ref{cor:unitary}. 

\begin{theorem} \label{thm:distinct} Let $M$ be an  $n\times n$ invertible matrix. 
\begin{itemize}
\item[i.] For almost every $n\times n$ diagonal matrix $D$, the matrix $M D$ has distinct eigenvalues. 
\item[ii.] For almost every $n\times n$ diagonal unitary matrix $\Omega$, the matrix $M \Omega$ has distinct eigenvalues. 
\end{itemize}
\end{theorem}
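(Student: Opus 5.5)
The plan is to use the discriminant. For a fixed invertible $M$, consider the polynomial map $d=(d_1,\dots,d_n)\mapsto \operatorname{disc}\big(\det(xI - M\,\mathrm{diag}(d))\big)$. The coefficients of the characteristic polynomial of $MD$ are polynomials in $d_1,\dots,d_n$. The discriminant is a polynomial in those coefficients, so $\Delta(d):=\operatorname{disc}(\chi_{MD})$ is a polynomial in $d_1,\dots,d_n$, and $MD$ has a repeated eigenvalue exactly when $\Delta(d)=0$. The zero set of a polynomial that is not identically zero has Lebesgue measure zero in $\mathbb{C}^n$. For part (ii), the restriction of such a polynomial to the torus $\mathbb{T}^n$ also vanishes only on a null set for Haar measure. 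This holds because a polynomial vanishing on a positive-measure subset of $\mathbb{T}^n$ vanishes identically on $\mathbb{T}^n$ (real-analyticity in the angle variables), and hence identically on $\mathbb{C}^n$, since $\mathbb{T}^n$ is a set of uniqueness for polynomials. So both (i) and (ii) reduce to one claim: $\Delta$ is not the zero polynomial, i.e. there exists \emph{some} diagonal $D$ (not necessarily unitary) with $MD$ having distinct eigenvalues.

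To exhibit such a $D$, I would argue by induction on $n$ using a degeneration. Since $M$ is invertible, there is a permutation $\pi$ with $\prod_i m_{i,\pi(i)}\neq 0$. The existence of such a $\pi$ is the standard consequence of $\det M\ne 0$ via the Leibniz expansion. Reordering rows and columns changes $MD$ only by permutation similarity and relabeling of $D$, but rows and columns must be permuted compatibly, so I will instead argue directly.

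The cleaner route is to choose $D=\mathrm{diag}(t^{k_1},\dots,t^{k_n})$ with widely separated exponents and to study the eigenvalues of $MD$ as $t\to 0$. By a Newton polygon / tropical argument, the eigenvalues behave like $t^{\alpha_j}c_j$. In the leading-order analysis, $\det(xI-MD)$ has coefficients $e_k(MD)=\sum_{|S|=k}\det M[S,S]\prod_{i\in S}t^{k_i}$. Ordering $k_1\ll k_2\ll\dots$, the dominant term of $e_k$ is $\det M[S_k,S_k]\,t^{k_1+\dots+k_k}$ with $S_k=\{1,\dots,k\}$, provided the leading principal minors are nonzero. Then the eigenvalues separate at distinct scales, $\lambda_k\approx t^{k_k}\det M[S_k]/\det M[S_{k-1}]$, and are therefore distinct for small $t$.

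The main obstacle is that the leading principal minors of $M$ need not be nonzero. The fix is to choose a nested chain of index sets $S_1\subset S_2\subset\dots\subset S_n$ with all principal minors $\det M[S_k,S_k]\neq0$, and to assign exponents accordingly. However, an invertible matrix need not have such a chain: $M=\begin{bmatrix}0&1\\1&0\end{bmatrix}$ has zero diagonal. So I would handle this by a direct argument. I would show $\Delta\not\equiv 0$ by induction, splitting off a block: choose $D$ so that $MD$ is close to a block situation, or argue that the set of $d$ with distinct eigenvalues is nonempty by perturbing the eigenvalues of $MD$. My first attempt would be to note that $\Delta\equiv0$ would force every $MD$ to be derogatory-or-repeated, and to derive a contradiction by differentiating: at $D=I$ (after replacing $M$ by $MD_0$), first-order perturbation of a repeated eigenvalue $\lambda$ under $D=I+\epsilon E$ splits it according to the eigenvalues of the compressed matrix $P\,M E\,P$ on the generalized eigenspace. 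Choosing diagonal $E$ makes these distinct, because the map $E\mapsto$ compression is nondegenerate when $M$ is invertible. Iterating reduces multiplicities until all eigenvalues are simple, which shows $\Delta\not\equiv0$ and completes both parts.
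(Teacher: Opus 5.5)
\emph{There is a genuine gap.} Your reduction is sound, and it is the same framework the paper uses. $\Delta(\vec d)=\operatorname{disc}_x V_M(x,\vec d)$ is a polynomial in $\vec d$; the paper uses the resultant of $V_M$ and $\partial_x V_M$, which is the same object up to a constant. Its zero set is null in $\mathbb{C}^n$, and your real-analyticity/uniqueness argument correctly transfers this to $\mathbb{T}^n$ for (ii).

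The gap is the one step that carries all the content: $\Delta\not\equiv 0$. The existence of a single diagonal $D$ with $MD$ having simple spectrum is exactly the theorem of Choi, Huang, Li and Sze \cite{CHLS12}, which settled a conjecture of \cite{FLH12}. So it cannot be waved through, and your sketch does not prove it. You abandon the Newton-polygon route yourself. The perturbation route has two problems.
\begin{itemize}
\item[(a)] You use the rule that a repeated eigenvalue $\lambda$ of $A=MD_0$ splits as $\lambda+\epsilon\mu_j+o(\epsilon)$, with $\mu_j$ the eigenvalues of the compression of $AE$ to the generalized eigenspace. This holds only when $\lambda$ is semisimple. At a nontrivial Jordan block the perturbed eigenvalues are Puiseux series in fractional powers of $\epsilon$, and that compression does not govern them. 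Nothing in your sketch lets you choose $D_0$ so that $MD_0$ has no Jordan blocks, so your ``iterate until simple'' step is not established.
\item[(b)] Even in the semisimple case, ``$E\mapsto PMEP$ is nondegenerate'' is asserted, not proved, and it is not a formality. Since $A$ acts as $\lambda\neq 0$ on the eigenspace, the compression is $\lambda\,Y^*EX$. Here the columns of $X$ and $Y$ are right and left eigenvectors normalized by $Y^*X=I_m$. So you need a diagonal $E$ for which $Y^*EX$ has $m$ distinct eigenvalues. That is a problem of the same type as the theorem itself: it concerns the spectrum of $E$ times the singular idempotent $XY^*$. Invertibility of $M$ plays no visible role in your justification.
\end{itemize}

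The way around this is not to construct $D$ at all, but to read $\Delta\not\equiv0$ off the multi-affine structure of $V_M(x,\vec d)=\det(xI-MD)$.
\begin{itemize}
\item[1.] $V_M$ is monic in $x$ and we are in characteristic zero. So by Gauss's lemma, $\Delta\equiv 0$ if and only if $g^2$ divides $V_M$ in $\mathbb{C}[x,\vec d]$ for some irreducible $g$ of positive degree in $x$.
\item[2.] Because $\deg_{d_j}V_M\le 1$, we get $2\deg_{d_j}g\le 1$. Hence $g$ involves no $d_j$, and $g=g(x)$.
\item[3.] Setting $\vec d=0$ gives $g^2\mid x^n$, so $g=cx$.
\item[4.] Then $V_M(0,\vec d)\equiv 0$, which contradicts $V_M(0,\vec d)=(-1)^n\det M\,d_1\cdots d_n$.
\end{itemize}
This is the mechanism behind the paper's proof. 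Lemma~\ref{lem:Vfact} shows that the irreducible factors of $V_M$ are volume polynomials of invertible blocks in pairwise disjoint sets of variables; invertibility excludes a factor that is a pure power of $x$. The resultant argument in Theorem~\ref{thm:Vroots} then shows that a common factor of $V_M$ and $\partial_x V_M$ would have to divide its own $x$-derivative. Once $\Delta\not\equiv0$ is established this way, your measure-zero reductions finish both (i) and (ii).
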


Part ($ii$) of the above theorem is a key tool in the proof of Theorem \ref{thm:fractured}.
Meanwhile, the proof of Theorem \ref{thm:distinct} rests on natural properties of the characteristic polynomials associated to the products $MD$ and $M \Omega$. While it seems possible that Theorem \ref{thm:distinct} is known to the matrix analysis community, we have not been able to locate it in the literature. Instead, it seems very closely related to the recent work of Choi et. al. in \cite{CHLS12}, who affirmed a conjecture posed in \cite{FLH12}  and proved: \emph{if $M$ is an $n \times n$ invertible matrix, then there exists a diagonal matrix $D$ such that $MD$ has distinct eigenvalues}.

The work in this paper also motivates several open questions that the interested reader could explore. We present these in Section \ref{sec:open}. For example, one could study what happens to the location of the exceptional set $S_{\widehat{T}}$ of a fractured Nelson dilation  as the dimension of the underlying matrix $T$ tends to infinity. Results in that vein could allow our tools to give information about invariant subspaces of operators $T$ acting on infinite-dimensional Hilbert spaces. There are also natural open questions connected to the behavior of the eigenvalue functions associated to Nelson dilations and the reducibility of nice multivariate polynomials that appear in Section \ref{sec:fracturing}.

\section{Minimal Dilations} \label{minimality}

Let $T$ be an $n \times n$ matrix satisfying $\| T\| <1$, so that $T$ is a strict contraction on $\mathbb{C}^n$. Let $\widehat{T}$ be a Nelson dilation of $T$ as in \eqref{eqn:nd}. Then the formula for  $\widehat{T}$ immediately implies that $\widehat{T}(\zeta)$ is a unitary matrix for each $\zeta \in \mathbb{T}$. 

To use $\widehat{T}$ to construct dilations of $T$,  we will need  the vector-valued spaces $H^2_n:=H^2(\mathbb{D}) \otimes \mathbb{C}^n$ and $L^2_n:=L^2(\mathbb{T}) \otimes \mathbb{C}^n$ discussed earlier, which satisfy $\mathbb{C}^n \subseteq H^2_n \subseteq L^2_n$. Let  $M_{\widehat{T}}|_{H_n^2}$ denote multiplication by the bounded, analytic  matrix-valued function $\widehat{T}$ on $H^2_n$ and similarly, let $M_{\widehat{T}}|_{L_n^2}$  denote multiplication by $\widehat{T}$ on $L^2_n$. With this setup, we have the following:

\begin{theorem*} \ref{thm:minimal}. $M_{\widehat{T}}|_{H_n^2}$ is a minimal isometric dilation of $T$ and $M_{\widehat{T}}|_{L_n^2}$ is a minimal unitary dilation of $T$.
 \end{theorem*}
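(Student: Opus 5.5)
Write $\widehat{T}(z) = U B(z) V^*$ with $B(z)=\mathrm{diag}(b_1(z),\dots,b_n(z))$. Since each $b_i$ is an automorphism, $|b_i(\zeta)|=1$ for $\zeta\in\mathbb{T}$, so $\widehat{T}(\zeta)$ is unitary on $\mathbb{T}$. Hence $M_{\widehat{T}}$ is unitary on $L^2_n$, with inverse $M_{\widehat{T}^*}$ where $\widehat{T}^*(\zeta):=\widehat{T}(\zeta)^*$. Since $\widehat{T}$ is analytic on a neighborhood of $\overline{\mathbb{D}}$, $M_{\widehat{T}}$ maps $H^2_n$ into itself, so $M_{\widehat{T}}|_{H^2_n}$ is an isometry. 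We identify $\mathbb{C}^n$ with the constants in $H^2_n\subseteq L^2_n$.

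\emph{Dilation property.} For $h\in\mathbb{C}^n$ we have $M_{\widehat{T}}^k h = \widehat{T}(z)^k h$, an $H^2_n$ function. Projecting onto constants is evaluation at $0$, i.e.\ taking the zeroth Fourier coefficient, so
\[ P_{\mathbb{C}^n} M_{\widehat{T}}^k h = \widehat{T}(0)^k h = T^k h. \]
This works for both the $H^2_n$ and the $L^2_n$ version, since the function lies in $H^2_n$ and the zeroth coefficient is the same.

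\emph{Minimality, isometric case.} It suffices to show that $\mathcal{K}:=\overline{\operatorname{span}}\{\widehat{T}^m h\}$ contains $z\mathbb{C}^n$; then $\mathcal{K}$ contains $z^j\mathbb{C}^n$ for all $j$ by induction, hence equals $H^2_n$. For the induction, note that $\mathcal{K}$ is a closed subspace invariant under $M_{\widehat{T}}$. If $\mathcal{K}\supseteq z^{j}\mathbb{C}^n$ for all $j\le N$, apply the same argument to $z^N\mathbb{C}^n$ (multiplication by $z^N$ commutes with $M_{\widehat{T}}$) to get $z^{N+1}\mathbb{C}^n\subseteq\mathcal{K}$.

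For the base step, consider $\widehat{T}h - Th = (\widehat{T}(z)-\widehat{T}(0))h$. This equals $U(B(z)-B(0))V^*h$, and $b_i(z)-b_i(0) = z\,c_i(z)$ with $c_i(0)=b_i'(0)=\lambda_i(1-|a_i|^2)\neq 0$. So $\widehat{T}h-Th = z\,U C(z)V^*h$ with $C(0)$ invertible. Evaluating modulo $z^2 H^2_n$, the map $h\mapsto$ (coefficient of $z$) $=UC(0)V^*h$ is onto $\mathbb{C}^n$. Thus $\mathcal{K}$ contains, for each $w\in\mathbb{C}^n$, an element of the form $zw + z^2 g$.

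This is the main obstacle: to peel off the tail $z^2g$ and get exactly $zw$. I would instead argue via the orthocomplement. Suppose $f\in H^2_n$ is orthogonal to $\mathcal{K}$, and let $f_j$ be its first nonzero Taylor coefficient. Then $f\perp z^j(\cdots)$ pieces, and using elements $z^j w+z^{j+1}g$ (obtained from the above by multiplying by $z^j$, valid under the induction) gives $\langle f_j,w\rangle=0$ for all $w$. Hence $f_j=0$, a contradiction, and so $f=0$.

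More cleanly, I would run the induction on the statement ``$\mathcal{K}^\perp\subseteq z^{j}H^2_n$''. Since $\mathbb{C}^n\subseteq\mathcal{K}$, we get $\mathcal{K}^\perp\subseteq zH^2_n$. Given $\mathcal{K}^\perp\subseteq z^jH^2_n$, note that $\widehat{T}^{j}h$ minus a combination of lower powers produces elements with leading term $z^j C(0)$-type invertible coefficient. Concretely, by induction $\widehat{T}^{m}h$, expanded, spans leading coefficients filling $\mathbb{C}^n$ at level $z^j$ modulo $z^{j+1}$, using $\prod_k (\widehat{T}-T)$-like combinations: $(\widehat{T}-\widehat{T}(0))^j h$ lies in $\mathcal{K}$ (it is a polynomial in $M_{\widehat{T}}$ applied to $h$) and equals $z^j(UC(0)V^*)^j h + O(z^{j+1})$. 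Pairing with $f\in\mathcal{K}^\perp\subseteq z^jH^2_n$ kills the $O(z^{j+1})$ part only modulo higher coefficients of $f$. But $f$'s $z^j$ coefficient $f_j$ pairs with $(UC(0)V^*)^jh$ and the higher parts of $f$ pair with the tail, so I need care. The resolution: pair instead against the coefficient-by-coefficient structure, i.e.\ show $\langle f, z^j w\rangle=0$ by noting $\langle f, (\widehat{T}-T)^jh\rangle=0$ and the tail terms vanish because $f\in z^jH^2$ only gives orthogonality to lower terms. So the tail genuinely must be handled. I would handle it by using the backward shift: $\mathcal{K}^\perp$ is invariant under $M_{\widehat{T}}^*$, which on $H^2$ is a Toeplitz operator, and I would aim to show $\mathcal{K}^\perp$ is $S^*$-invariant, forcing $f_0$-type coefficients to be orthogonal to $\mathbb{C}^n$ repeatedly, so that $f=0$.

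\emph{Unitary case.} $\overline{\operatorname{span}}\{\widehat{T}^m h: m\in\mathbb{Z}\}$ contains $H^2_n$ by the isometric case. Applying $M_{\widehat{T}}^{-1}=M_{\widehat{T}^*}$, whose symbol $\widehat{T}(\zeta)^* = V\,\overline{B(\zeta)}\,U^*$ is conjugate analytic with nonzero derivative data, gives the symmetric argument for $\overline{z}^j\mathbb{C}^n$. Hence the span is all of $L^2_n$.
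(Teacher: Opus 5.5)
The parts that work are the dilation computation, the isometry/unitarity of $M_{\widehat T}$, and your reduction from $z\mathbb{C}^n\subseteq\mathcal{K}$ to $\mathcal{K}=H^2_n$. That reduction is valid: once $z^N\mathbb{C}^n\subseteq\mathcal{K}$, invariance gives $z^N\mathcal{K}\subseteq\mathcal{K}$.

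The gap is the base step $z\mathbb{C}^n\subseteq\mathcal{K}$. It is the whole content of minimality, and you never establish it. Knowing that $\mathcal{K}$ contains, for each $j$ and $w$, some $z^jw+z^{j+1}g$ gives no control over the pairing $\langle f,z^{j+1}g\rangle$. This is not a technicality, because leading-term information alone cannot force density. For $0<|\alpha|<1$, the closed hyperplane $\{f\in H^2(\mathbb{D}):\langle f,z/(1-\bar\alpha z)\rangle=0\}$ contains $1$ and $z^j-\alpha^{-1}z^{j+1}$ for every $j\ge 1$, yet it is proper. Your first orthocomplement argument drops exactly this tail term. Your final plan, showing $\mathcal{K}^\perp$ is backward-shift invariant, amounts to $z\mathcal{K}\subseteq\mathcal{K}$. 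That already contains the base step, so as stated it is circular.

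What is missing is a global input: the structure of $\ker M_{\widehat T}^*$ together with purity of the isometry.
\begin{itemize}
\item Since $\frac{1}{1+\bar a_i z}=\frac{1-\bar a_i\bar\lambda_i b_i(z)}{1-|a_i|^2}$, the wandering subspace $H^2_n\ominus\widehat{T}H^2_n=\operatorname{span}\{U\tfrac{1}{1+\bar a_i z}\vec e_i\}$ lies in $\mathbb{C}^n+\widehat{T}\mathbb{C}^n\subseteq\mathcal{K}$.
\item So if $f\in\mathcal{K}^\perp$, then $f\perp\ker M_{\widehat T}^*$. Hence $f=\widehat{T}g$ with $g=M_{\widehat T}^*f\in\mathcal{K}^\perp$.
\item Iterating puts $f$ in $\bigcap_j\widehat{T}^jH^2_n$. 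This intersection is $\{0\}$ because $\|\widehat{T}(z)\|<1$ on $\mathbb{D}$.
\end{itemize}
This is precisely the paper's Wold-decomposition argument. Your observation $b_i'(0)\neq 0$ reflects the fact that $\operatorname{span}\{1,b_i\}$ is two-dimensional. What is actually needed is that this span contains the wandering vector $\frac{1}{1+\bar a_i z}$, not merely an element with leading term $z$.

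Your unitary step, a ``symmetric argument'' for $\bar z^j\mathbb{C}^n$, is essentially the paper's reflection argument. The paper makes it precise by noting that $\zeta\mapsto\widehat{T}(\bar\zeta)^*$ is a Nelson dilation of $T^*$, so the isometric case applies after $\zeta\mapsto\bar\zeta$. However, it relies on the isometric case, so it inherits the same gap.
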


\begin{proof} First, we show that  $M_{\widehat{T}}|_{H_n^2}$ is an isometric dilation of $T$ and $M_{\widehat{T}}|_{L_n^2}$ is a unitary dilation of $T$. Indeed, since $\widehat{T}$ is analytic and unitary-valued on $\mathbb{T}$, it is immediate from the standard inner products on $H_n^2$ and $L^2_n$ that $M_{\widehat{T}}|_{H_n^2}$ is an isometry and $M_{\widehat{T}}|_{L_n^2}$ is a unitary.  To see that $M_{\widehat{T}}|_{H_n^2}$ and $M_{\widehat{T}}|_{L_n^2}$ are dilations of $T$, fix $\vec{x} \in \mathbb{C}^n$ and $m \in \mathbb{N}$. Then
\[ P_{\mathbb{C}^n} \ M_{\widehat{T}}^m  \ \vec{x} = P_{\mathbb{C}^n} \ \widehat{T}^m( \cdot ) \  \vec{x} =  \widehat{T}^m(0)  \vec{x} =T^m \vec{x},\]
where $P_{\mathbb{C}^n}$ denotes the orthogonal projection from $H_n^2$ (or $L_n^2$) onto $\mathbb{C}^n$.  Thus, 
\[ T^m  = P_{\mathbb{C}^n} \ M_{\widehat{T}}^m  |_{\mathbb{C}^n},\]
which agrees with \eqref{eqn:dilation} and implies that both $M_{\widehat{T}}|_{H_n^2}$ and $M_{\widehat{T}}|_{L_n^2}$ are dilations of $T$.

For the remainder of the proof, we establish minimality. 
First, we study $H^2_n \ominus M_{\widehat{T}} H^2_n$. Specifically, for $i=1, \dots, n,$ let $b_i(z) =  \lambda_i \left(\frac{z+a_i}{1+ \bar{a}_i z}\right)$ from \eqref{eqn:nd} with $a_i \in \mathbb{D}$ and $\lambda_i \in \mathbb{T}$. Then one-dimensional results (e.g. see \cite[Proposition 5.16]{garciamashreghiross}) imply that $H^2(\mathbb{D}) \ominus b_i H^2(\mathbb{D})$ is spanned by the single function $ v_i(z) = \frac{1}{1+\bar{a_i}z}$. Let $\vec{e}_i$ denote the i$^{th}$ unit vector in $\mathbb{C}^n$ and set $\vec{f}_i =  v_i(z)\vec{e}_i$,   the element in $H^2_n$ that is $v_i(z)$ in the $i^{th}$ position and $0$ otherwise. Then it is straightforward to check that 
\begin{equation} \label{eqn:span} H^2_n \ominus M_{\widehat{T}} H^2_n =\text{span} \left \{ U \vec{f}_i:  1 \le i \le n \right\},\end{equation}
where $U$ is from the singular value decomposition \eqref{eqn:svd} for $T$. Indeed, each $U \vec{f}_i \in H^2_n \ominus M_{\widehat{T}} H^2_n$ because for all $\vec{g} \in H^2_n$,  
\[ \left \langle  U \vec{f}_i, M_{\widehat{T}} \vec{g}\right  \rangle_{H^2_n} = \left \langle   v_i , b_i  (V^* \vec{g})_i \right  \rangle_{H^2} =0,\]
where $ (V^* \vec{g})_i \in H^2(\mathbb{D})$ is the $i^{\text{th}}$ component of the vector-valued function $V^* \vec{g}$. Meanwhile, if $\vec{f} \in H^2_n \ominus M_{\widehat{T}} H^2_n$, then 
\begin{equation} \label{eqn:wandering} \left \langle  \vec{f}, M_{\widehat{T}} \vec{g} \right \rangle_{H^2_n} =  0 \text{ for all } \vec{g} \in H^2_n.\end{equation}
For each $h \in H^2(\mathbb{D})$, choose $\vec{g} = V \vec{e}_i h$. Then \eqref{eqn:wandering} implies  that $(U^* \vec{f})_i \in H^2(\mathbb{D}) \ominus b_i H^2(\mathbb{D})$ and hence is a scalar multiple of $v_i(z)$, as needed. Furthermore, as $\widehat{T}(z)$ is a strict contraction for all $z\in \mathbb{D}$, $\cap_{j \ge 0} M_{\widehat{T}}^j H^2_n  = \{0\}$ and so the Wold decomposition of $M_{\widehat{T}}|_{H_n^2}$ is exactly 
\begin{equation} \label{eqn:wold2} H^2_n = \bigoplus_{j=0}^{\infty}  M_{\widehat{T}}^j \left( H^2_n \ominus M_{\widehat{T}} H^2_n\right).
\end{equation}
Now we need to establish that the $M_{\widehat{T}}$-invariant subspace
\[ S := \overline{ \text{span}} \left \{ \left(M_{\widehat{T}}\right)^m \vec{x} : \vec{x} \in \mathbb{C}^n , m \in \mathbb{N} \right\}\]
  is all of $H^2_n$. 
Note that $S$ clearly contains $\mathbb{C}^n$ and $ \widehat{T}(z) \mathbb{C}^n$. Choose $\vec{x} \in \mathbb{C}^n$ so that $V^*\vec{x} = \vec{e}_i$. Then  $\widehat{T}(z) \vec{x} = U b_i(z) \vec{e}_i \in S$ and by choosing appropriate constants  $c_1, c_2 \in \mathbb{C}$, one can show
\[ U \vec{f}_i =U  \tfrac{1}{1+\bar{a_i}z} \vec{e}_i=  U\left( c_1 + c_2b_i(z) \right) \vec{e}_i =  c_1 U \vec{e}_i + c_2 U b_i(z) \vec{e}_i \in S.\]
By \eqref{eqn:span}, this implies  $H^2_n \ominus M_{\widehat{T}} H^2_n \subseteq S$ and since $S$ is $M_{\widehat{T}}$-invariant, the Wold decomposition of $H^2_n$  in \eqref{eqn:wold2} implies that $S$ must be all of $H^2_n$: 
\begin{equation} \label{eqn:min2} H^2_n := \overline{ \text{span}} \left \{ \left(M_{\widehat{T}}\right)^m \vec{x} : \vec{x} \in \mathbb{C}^n , m \in \mathbb{N} \right\},\end{equation}
 so $M_{\widehat{T}}|_{H^2_n}$ is a minimal isometric dilation of $T$.

 To show $M_{\widehat{T}}|_{L^2_n}$ is a minimal unitary dilation of $T$, we need to show that
 \[ L^2_n = M,  \ \ \text{ where } \ \ M : = \overline{ \text{span}} \left \{ \left(M_{\widehat{T}}\right)^m \vec{x} : \vec{x} \in \mathbb{C}^n , m \in \mathbb{Z} \right\}.\]
 Fix $f\in L^2_n$. Then $ f = f_1 + f_2$ where $f_1 \in H^2_n$ and $f_2(z) = g(\bar{z})$ for some $g\in H^2_n$. By \eqref{eqn:min2}, $f_1 \in M$ and so, we just need to show that $f_2 \in M$. To that end, consider $T^*$. Then from \eqref{eqn:svd}, 
\[ T^* = V \begin{bmatrix} \bar{s}_1 && \\
& \ddots & \\
&& \bar{s}_n \end{bmatrix}  U^{*}.\]
For $i=1, \dots n$, let $b_i(z) = \lambda_i \left( \frac{ z + a_i}{1+\bar{a}_iz}\right)$ be the $i^{\text{th}}$ automorphism for $\widehat{T}$ in \eqref{eqn:nd},  where $\lambda_i \in \mathbb{T}$, $a_i \in \mathbb{D}$, and $\lambda_i a_i = s_i$. Let $\hat{b}_i(z) = \bar{\lambda}_i \left(\frac{ z + \bar{a}_i}{1+a_iz}\right)$. Then using $\hat{b}_1, \dots, \hat{b}_n$ gives a Nelson dilation $\widehat{T^*}$ of $T^*$ satisfying
\[ \widehat{T^*}(\bar{z}) = \widehat{T}(z)^* \text{ for all } z\in \mathbb{T}.\]  
 By the same isometric minimality arguments as before, we can conclude that 
\[ g \in H^2_n = \overline{ \text{span}} \left \{ \left(M_{\widehat{T^*}}\right)^m \vec{x} : \vec{x} \in \mathbb{C}^n , m \in \mathbb{N} \right\}.\]
Then there exists a sequence $(g_k)$ with each 
\[ g_k(z) = \sum_{\ell=0}^{L_k} \widehat{T^*}(z)^\ell \vec{x}_{k \ell}\]
for $L_k \in \mathbb{N}$ and $\vec{x}_{k \ell} \in \mathbb{C}^n$ such that  $|| g_k - g ||_{H^2_n} \rightarrow 0.$ For $z \in \mathbb{T}$, define
\[ h_k(z) = g_k(\bar{z}) =  \sum_{\ell=0}^{L_k}  \widehat{T^*}(\bar{z})^\ell \vec{x}_{k \ell} =  \sum_{\ell=0}^{L_k}  (\widehat{T}(z)^*)^{\ell} \vec{x}_{k \ell} =   \sum_{\ell=0}^{L_k} (\widehat{T}(z))^{-\ell} \vec{x}_{k \ell} \in M. \]
Then $|| h_k - f_2 ||_{L^2_n} = || g_k - g ||_{H^2_n} \rightarrow 0,$ so $L^2_n = M$, as needed.  \end{proof} 


Since minimal isometric (and unitary) dilations are known to be unique up to unitary equivalence, this gives an alternative way to view the classical isometric Sz.-Nagy dilation.

\begin{remark} Specifically, let $T$ be an $n \times n$ matrix with $\| T\| <1$ and let $\widehat{V} \in B(\mathcal{K})$ be the  isometric Sz.-Nagy dilation of $T$. As $T$ is a strict contraction, we can take $\mathcal{K} = \bigoplus_{j=1}^{\infty} \mathbb{C}^n$ and 
\[ \widehat{V} = \begin{pmatrix} 
T & 0 & 0 & 0 & \hdots \\
(1-T^*T)^{1/2}  & 0 & 0 &0 & \hdots  \\
0 & I_{\mathbb{C}^n} & 0 &0 & \hdots  \\
0 & 0 &  I_{\mathbb{C}^n}  &0 & \hdots  \\
\vdots & \vdots & & \ddots & \ddots \\
\end{pmatrix}. \]
This isometric dilation of $T$\footnote{This is sometimes called the Sch\"affer form of a dilation \cite{schaffer55}.}   is known to be minimal. As such, it must be unitarily equivalent to $M_{\widehat{T}}|_{H^2_n}$ via a unitary map 	$W$ that preserves $\mathbb{C}^n$. Indeed, if we first define $W$ via:
\[ W \left( \widehat{V}^m \vec{x} \right) = M_{\widehat{T}}^m \vec{x}, \text{ for all } m \in \mathbb{N}, \vec{x} \in \mathbb{C}^n,\]
then $W$ extends to a unitary map from $\mathcal{K}$ to $H^2_n$ which satisfies $W \widehat{V} = M_{\widehat{T}} W$ and is the identity map on $\mathbb{C}^n$. For details about either this dilation or the unitaries connecting minimal isometric dilations, see  the statement and proof of \cite[Theorem 2.3]{shalit21}.
\end{remark}

We end the section with some examples of Theorem \ref{thm:minimal}.

\begin{example} \label{ex1} Consider the $2\times 2$ matrix $T_1$ with $\| T_1 \| = \frac{1}{2} <1$ and its singular value decomposition:
\[ T_1 = \begin{bmatrix} 0 & \frac{1}{2} \\ 0 & 0 \end{bmatrix} = \begin{bmatrix} 1 & 0 \\ 0 & 1 \end{bmatrix}  \begin{bmatrix} \frac{1}{2} & 0 \\ 0 & 0 \end{bmatrix}  \begin{bmatrix} 0 & 1 \\ 1 & 0 \end{bmatrix}. \]
Then an associated Nelson dilation $\widehat{T}_1$ is given by
\[ \widehat{T}_1(z)  = \begin{bmatrix} 1 & 0 \\ 0 & 1 \end{bmatrix}  \begin{bmatrix} \frac{z+ \frac{1}{2}}{1+ \frac{z}{2}} & 0 \\ 0 & z \end{bmatrix}  \begin{bmatrix} 0 & 1 \\ 1 & 0 \end{bmatrix} =   \begin{bmatrix}0&  \frac{z+ \frac{1}{2}}{1+ \frac{z}{2}}  \\ z & 0 \end{bmatrix} \]
 and so, $M_{\widehat{T}_1}|_{H_2^2}$ is a minimal isometric dilation of $T_1$ and $M_{\widehat{T}_1}|_{L_2^2}$ is a minimal unitary dilation of $T_1$ (with uniqueness up to unitary equivalence).

We can similarly consider the $3\times 3$ matrix $T_2$\footnote{A nice description of $3 x 3$ orthogonal matrices with rational entries appears in \cite{pall}. Up to trivial changes, for the first matrix in $T_2$, we use formula (10) in \cite{pall} with $t_0=t_1=t_2=1$, $t_3=0$, $Nt =3$.} 
 with $\| T_2 \| = \frac{1}{2} <1$ given by 
\[ T_2 = \setstretch{1.7}
\left[
\begin{array}{rrr}
 \frac{i}{6} & -\frac{2i}{15} & -\frac{2}{9} \\
 \frac{i}{3} & \frac{2 i}{15} & -\frac{1}{9} \\
 \frac{i}{3} & -\frac{i}{15} & \frac{2}{9} \\
\end{array}
\right]
=
\left[
\begin{array}{rrr}
 \frac{1}{3} & \frac{2}{3} & \frac{2}{3} \\
 \frac{2}{3} & -\frac{2}{3} & \frac{1}{3} \\
 \frac{2}{3} & \frac{1}{3} & -\frac{2}{3} \\
\end{array}
\right]
\left[
\begin{array}{rrr}
 \frac{i}{2} & 0 & 0 \\
 0 & -\frac{i}{5} & 0 \\
 0 & 0 & -\frac{1}{3} \\
\end{array}
\right]
\left[
\begin{array}{ccc}
1 & 0 & 0 \\
 0 & 1 & 0 \\
 0 & 0 & 1 \\
\end{array}
\right]. \]
Then an associated Nelson dilation $\widehat{T}_2$ is given by
\[ \widehat{T}_2(z) =  \setstretch{2} \left[
\begin{array}{rrr}
 \frac{1}{3} & \frac{2}{3} & \frac{2}{3} \\
 \frac{2}{3} & -\frac{2}{3} & \frac{1}{3} \\
 \frac{2}{3} & \frac{1}{3} & -\frac{2}{3} \\
\end{array}
\right]
\left[
\begin{array}{ccc}
 \frac{z+\frac{i}{2}}{1-\frac{i z}{2}} & 0 & 0 \\
 0 & \frac{z-\frac{i}{5}}{1+\frac{i z}{5}} & 0 \\
 0 & 0 & \frac{z-\frac{1}{3}}{1-\frac{z}{3}} \\
\end{array}
\right] = 
\left[
\begin{array}{rrr}
 \frac{z+\frac{i}{2}}{3-\frac{3i z}{2}} &\frac{2z-\frac{2i}{5}}{3+\frac{3i z}{5}}  & \frac{2z-\frac{2}{3}}{3-z } \\ 
 \frac{2z+i }{3-\frac{3iz}{2}} & -\frac{2z-\frac{2i}{5}}{3+\frac{3i z}{5}} & \frac{z-\frac{1}{3}}{3 -z } \\
 \frac{2z+ i }{3-\frac{3i z}{2}} & \frac{z-\frac{i}{5}}{3+\frac{3i z}{5}} & -\frac{2z-\frac{2}{3}}{3-z } \\
\end{array}
\right],\]
 and so, $M_{\widehat{T}_2}|_{H_3^2}$ is a minimal isometric dilation of $T_2$ and $M_{\widehat{T}_2}|_{L_3^2}$ is a minimal unitary dilation of $T_2$.

\end{example}

\section{Fractured Nelson Dilations} \label{fractured} 
In this section, we consider the structure and properties of (automorphic) Nelson dilations as in \eqref{eqn:nd}. We first prove Theorem \ref{thm:fractured}, which shows that every strictly contractive matrix has a Nelson dilation with a finite exceptional set $S_{\widehat{T}}$, as defined in \eqref{eqn:exceptional}. 

\begin{theorem*}\ref{thm:fractured}. Every square matrix $T$ with $\| T \| <1$ has a fractured Nelson dilation $\widehat{T}$. 
\end{theorem*}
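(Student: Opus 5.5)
Our plan is to exploit the freedom in choosing the automorphisms $b_i$, and to use part (ii) of Theorem~\ref{thm:distinct}. Write $D(z)=\mathrm{diag}(b_1(z),\dots,b_n(z))$, so that $\widehat{T}(z)=UD(z)V^*$. This matrix is similar to $V^*UD(z)$, hence has the same eigenvalues. Set $M:=V^*U$, which is unitary and in particular invertible.

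The key observation is that for $\zeta\in\mathbb{T}$ the matrix $D(\zeta)$ is a diagonal unitary. Moreover, the diagonal entries $b_i(\zeta)$ range over all of $\mathbb{T}$, and they can be adjusted independently through the choice of the rotation parameters $\lambda_i$.

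The strategy is to show that the discriminant
\[ \Delta(z):=\operatorname{disc}_x\det(xI-\widehat{T}(z)) \]
is not identically zero on $\mathbb{D}_T$. Each coefficient of the characteristic polynomial of $\widehat{T}(z)$ is a polynomial in the entries of $\widehat{T}(z)$, which are analytic on $\mathbb{D}_T$. So $\Delta$ is analytic on the disk $\mathbb{D}_T$, and the exceptional set $S_{\widehat{T}}$ is exactly its zero set. If $\Delta\not\equiv 0$, its zeros are isolated in $\mathbb{D}_T$. We then need only finitely many of them, which we address below.

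To obtain $\Delta\not\equiv0$ we fix a single point, $\zeta=1$, and arrange that $MD(1)$ has distinct eigenvalues. The constraint $b_i(0)=s_i$ means $\lambda_i a_i=s_i$. When $s_i\neq0$ we may still pick $a_i=s_i/\lambda_i$ for any $\lambda_i\in\mathbb{T}$, and the set of allowed $(a_i,\lambda_i)$ is a circle. When $s_i=0$ we have $a_i=0$ and $b_i(z)=\lambda_i z$, with $\lambda_i$ free.

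In either case, as $\lambda_i$ varies over $\mathbb{T}$ the value $b_i(1)=\lambda_i\frac{1+s_i\bar\lambda_i}{1+\bar s_i\lambda_i}$ covers all of $\mathbb{T}$. This is the step needing care: the map $\lambda\mapsto b(1)$ is continuous from $\mathbb{T}$ to $\mathbb{T}$, and we claim it has degree one, hence is onto. Indeed, $\frac{1+s\bar\lambda}{1+\bar s\lambda}$ is a quotient of conjugates, so it has winding number zero when $|s|<1$. Thus the tuple $\Omega=D(1)$ can be made equal to any diagonal unitary whatsoever.

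By Theorem~\ref{thm:distinct}(ii), almost every diagonal unitary $\Omega$ makes $M\Omega$ have distinct eigenvalues. We choose such an $\Omega$, realize it as $D(1)$ by choosing the $\lambda_i$, and conclude $\Delta(1)\neq 0$.

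For finiteness, note that $\overline{\mathbb{D}}\subset\mathbb{D}_T$, where $\mathbb{D}_T$ is the disk of radius $R=\min_i 1/|a_i|>1$ (or $\infty$). Zeros of $\Delta$ accumulating inside $\mathbb{D}_T$ are excluded, but zeros could a priori accumulate at the boundary circle $|z|=R$. We handle this by observing that $\widehat{T}(z)$ is a rational matrix function. Hence $\Delta$ is rational in $z$: after clearing the denominators $\prod_i(1+\bar a_i z)$, it becomes a nonzero polynomial. It therefore has finitely many zeros in all of $\mathbb{C}$, and $S_{\widehat{T}}$ is finite.

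The main obstacle is the middle step, which is precisely where Theorem~\ref{thm:distinct}(ii) is invoked. We must check that the parametrization constraint $b_i(0)=s_i$ still leaves the boundary values $b_i(1)$ free to be an arbitrary point of $\mathbb{T}$; this is what the degree argument above establishes.
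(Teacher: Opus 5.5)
Your proof is correct and follows the paper's argument: you pick $\Omega$ via Corollary~\ref{cor:unitary} applied to $V^*U$, choose the $b_j$ with $b_j(0)=s_j$ and $b_j(1)=\omega_j$ so that $\widehat{T}(1)=U\Omega V^*$ has distinct eigenvalues, and then use the rational dependence on $z$ to get finiteness. The differences are minor. Where you use a degree argument, the paper writes down $\lambda_j=\bar\omega_j\frac{\omega_j-s_j}{\bar\omega_j-\bar s_j}$ directly; indeed $b_j(1)=\frac{\lambda_j+s_j}{1+\bar s_j\lambda_j}$ is a M\"obius map of $\lambda_j$. And where the paper runs a gcd/B\'ezout argument on $p$ and $p_x$, you observe that the discriminant is a nonzero rational function of $z$, which is, if anything, more direct.
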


\begin{proof} Let $T$ be an $n \times n$ matrix with singular value decomposition as in \eqref{eqn:svd}. Since $V^* U$ is invertible, Corollary \ref{cor:unitary} implies that there is a diagonal unitary matrix 
\[ \Omega = \begin{bmatrix} \omega_1 && \\
& \ddots & \\
&& \omega_n \end{bmatrix} 
\]
such that $V^* U \Omega$ has distinct eigenvalues. This implies 
\[ U \Omega V^* = V \left( V^* U \Omega \right)V^*\]
also has distinct eigenvalues. Let $\widehat{T}$ be a Nelson dilation as in \eqref{eqn:nd} such that the associated automorphisms $b_1, \dots, b_n$ satisfy
\[ b_j(0) = s_j \ \text{ and } \ \ b_j(1) = \omega_j.\]
In particular, one can take $b_j(z) = \lambda_j \left( \frac{ z+ a_j}{1+\bar{a}_jz} \right)$ with $\lambda_j = \bar{\omega}_j \left(\frac{\omega_j- s_j}{\bar{\omega}_j - \bar{s}_j} \right) \in \mathbb{T}$ and $a_j = s_j \bar{\lambda}_j \in \mathbb{D}$.

Then $\widehat{T}(1) = U \Omega V^* $ has distinct eigenvalues.  We claim that this forces $\widehat{T}$ to be a fractured Nelson dilation of $T$.  To see this, for $z \in \mathbb{D}_T$, consider the  polynomial
\begin{equation} \label{eqn:char} p(x,z) = \det \left( x I - \widehat{T}(z) \right) \prod_{j=1}^{n} \left ( 1+\bar{a}_j z\right)^n,\end{equation}
which is just the characteristic polynomial of $\widehat{T}(z)$ multiplied by a polynomial in $z$ that is nonvanishing in $\mathbb{D}_T$ and ensures $p(x,z)$ is actually a two-variable polynomial. Then a point $z \in \mathbb{D}_T$ is also in $S_{\widehat{T}}$ if and only if $p(\cdot, z)$ and its partial derivative $p_x (\cdot, z)$ share a common zero. We show that this can only happen for a finite number of $z \in \mathbb{D}_T$. 

To that end, write $p(x,z) =q(x,z) p_1(x,z)$ and $p_x(x,z) = q(x,z) r(x,z)$ such that $p_1, r$ share no common factors. Then since $\widehat{T}(1)$ has distinct eigenvalues, $q(x,1)$ must be constant. Since for every $z\in \mathbb{D}_T$, $p(x,z)$ has highest degree term $c x^n$ in $x$ with $c \ne 0$, $q(x,1)$ constant implies that $q(x,z)$ cannot have any $x$ dependence. Thus, $q(x,z) = q(z)$ has a finite number of zeros, call them $\hat{z}_1, \dots \hat{z}_d$ in $\mathbb{C}$.

Meanwhile, since $p_1$ and  $r$ share no common factors, Bezout's theorem (see \cite[Chapter 8.7]{cox07}) implies that  the zero sets of $p_1$ and $r$ in $\mathbb{C}^2$ intersect in (at most) a finite number of points, call them $( x_1, z_1), \dots, (x_k, z_k)$. Then $S_{\widehat{T}} \subseteq \{ \hat{z}_1, \dots \hat{z}_d, z_1, \dots, z_k \} \cap \mathbb{D}_T$, which gives the claim. 
\end{proof}

Before considering the fine structure of  fractured Nelson dilations,  we need some basic information about their eigenvalues and eigenprojections.  In what follows, we will use many of the definitions and facts from \cite[Chapter 2]{kato} about matrices whose entries are one-variable analytic functions. 

\begin{remark}  Let $\widehat{T}$ be an $n \times n$ fractured Nelson dilation, so that $S_{\widehat{T}}$ is finite.  Since the set of eigenvalues of $\widehat{T}(z)$, denoted $\sigma(\widehat{T}(z))$,  are the solutions to the algebraic equation $\det ( x I - \widehat{T}(z))=0$, they form an $n$-valued, analytic function on $\mathbb{D}_T$. Additionally, if $z_0 \in \mathbb{D}_T \setminus  S_{\widehat{T}}$ so that $\widehat{T}(z)$ has distinct eigenvalues, then there is a neighborhood $\mathcal{U}_{z_0}\subseteq \mathbb{D}_T$ of $z_0$ and analytic functions $\lambda_1, \dots, \lambda_n$ such that the eigenvalues of $\widehat{T}(z)$ are given by
 \begin{equation} 
 \label{eqn:eigenvalues} \lambda_1(z), \dots, \lambda_n(z), \quad \text{ for } z \in \mathcal{U}_{z_0}.
 \end{equation}
If $\gamma$ is a curve in $\mathbb{D}_T \setminus  S_{\widehat{T}}$, then the eigenvalues functions in \eqref{eqn:eigenvalues} can be analytically continued along $\gamma$. However, if $\gamma$ encloses any point(s) from $S_{\widehat{T}}$ in its interior, then after each circuit around such points, the values in \eqref{eqn:eigenvalues} may permute amongst themselves. Such a permutation is illustrated later in Figure \ref{fig:branches}. 
Meanwhile, if $z_0 \in S_{\widehat{T}}$, then the eigenvalues of $\widehat{T}(z)$ can be parameterized by $n$ continuous functions (actually Puiseux series) near $z_0$, but not necessarily by separately analytic functions.

To consider the eigenprojections of  $\widehat{T}(z)$, let $\Gamma$ be a simple, closed, positively-oriented curve in $\mathbb{D}_T$. If $ z_0\in \mathbb{D}_T$ and $\sigma(\widehat{T}(z_0)) \cap \Gamma = \emptyset,$ then
\begin{equation} \label{eqn:Pgamma}  P(z, \Gamma):= \frac{1}{2 \pi i} \int_\Gamma (x I - \widehat{T}(z) ) ^{-1} dx\end{equation}
is analytic in a neighborhood of $z_0$ and equals the sum of the eigenprojections associated to the eigenvalues of $\widehat{T}(z)$ in the interior of $\Gamma.$ If $z_0 \in \mathbb{D}_T \setminus S_{\widehat{T}}$, then nearby, the distinct eigenvalues of $\widehat{T}(z)$ are parameterized by \eqref{eqn:eigenvalues} and for each $j$, we can choose $\Gamma_j$ such that $\lambda_j(z_0)$ is the only element of $\sigma(\widehat{T}(z_0))$ in its interior. Then 
\[  P_{j}(z):= \frac{1}{2 \pi i} \int_{\Gamma_j} (x I - \widehat{T}(z) )^{-1} dx\]
gives the eigenprojection associated to eigenvalue $\lambda_j(z)$ for all $z$ in a  neighborhood $\mathcal{U}_{z_0}$ of $z_0$. By the residue theorem, we also have this alternative form for the eigenprojection:
\begin{equation}\label{eq:langrange} P_j(z) = \prod_{i\ne j} \left(\frac{ \widehat{T}(z) - \lambda_i(z) I }{\lambda_j(z) - \lambda_i(z)} \right)  \text{ for } z \in \mathcal{U}_{z_0},\end{equation}
which is well suited to computations.

Now fix some $k$ with $1 \le k \le n-1$, and assume that we have partitioned the eigenvalue function $\sigma(\widehat{T}(z))$ into a $k$-valued function $\Lambda_1(z)$ and an $(n-k)$-valued function $\Lambda_2(z)$ on $\mathbb{D}_T$. Then for $\ell=1,2$ and each $z\in \mathbb{D}_T \setminus S_{\widehat{T}}$, we can define the function
\begin{equation} \label{eqn:Plambda} P_{\Lambda_\ell}(z) = \sum_{j: \lambda_j(z) \in \Lambda_\ell(z)} P_{j}(z),\end{equation}
the sum of the eigenprojections associated to the eigenvalues of $\widehat{T}(z)$ in $\Lambda_\ell(z)$. 
\end{remark}

In order to identify a common invariant subspace for all of the $\widehat{T}(z)$, we will need a nontrivial eigenprojection to extend analytically  to a neighborhood of $\mathbb{D}$. That motivates the following definition, which roughly requires that both the eigenvalues of $\widehat{T}$ and their associated eigenprojections split nicely into two separate collections:

\begin{definition} \label{def:pr} A fractured Nelson dilation $\widehat{T}$ is called \textbf{projection reducible} if there exists a nontrivial partition of its eigenvalue function $\sigma(\widehat{T}(z))$ into a $k$-valued function $\Lambda_1(z)$ and an $(n-k)$-valued function $\Lambda_2(z)$ so that the projections $P_{\Lambda_1}$ and $P_{\Lambda_2}$ from \eqref{eqn:Plambda} extend analytically to $\mathbb{D}_T$. 
Otherwise, we say that $\widehat{T}$ is \textbf{projection irreducible}.
\end{definition} 

While the definition of \emph{projection reducible} might feel opaque, the following remark puts it in the context of other, perhaps more natural, ways of defining reducibility of $\widehat{T}$ via decompositions of its eigenvalue function and the associated eigenprojections.

\begin{remark} \label{rem:pr} 
We say that a fractured Nelson dilation $\widehat{T}$ is \textbf{totally eigenvalue reducible} if there is a nontrivial partition of its eigenvalue function $\sigma(\widehat{T}(z))$ into a $k$-valued function $\Lambda_1(z)$ and an $(n-k)$-valued function $\Lambda_2(z)$ such that both functions are continuous and $\Lambda_1(z) \cap \Lambda_2(z) = \emptyset$ for all $z\in \mathbb{D}_T.$ As mentioned in Section \ref{sec:intro}, if  $\widehat{T}$ is \emph{not totally eigenvalue reducible}, then we say $\widehat{T}$ is \textbf{eigenvalue entangled}.

%

If $\widehat{T}$ is totally eigenvalue reducible, then  $\widehat{T}$ must also be projection reducible.  Indeed,  for each $z_0 \in \mathbb{D}_T$, $\Lambda_1(z_0) \cap \Lambda_2(z_0) = \emptyset$ implies that there is a simple, closed, positively-oriented curve $\Gamma_{z_0} \subseteq \mathbb{D}_T$ such that $\Lambda_1(z_0)$ is in the interior of $\Gamma_{z_0}$ and $\Lambda_2(z_0)$ is in the exterior of $\Gamma_{z_0}$. Then because $\Lambda_1$ and $\Lambda_2$ are continuous functions, for $z$ sufficiently close to $z_0$, the elements of $\Lambda_1(z)$ must stay interior to $\Gamma_{z_0}$ and the elements of $\Lambda_2(z)$ must stay exterior to $\Gamma_{z_0}$. Then, the  eigenprojection $P_{\Lambda_1}(z)$ onto the eigenvalues in the set $\Lambda_1(z)$ can be represented locally by \eqref{eqn:Pgamma} with $\Gamma = \Gamma_{z_0}$. As this gives a formula for $P_{\Lambda_1}$ that is analytic in a neighborhood of each  $z_0 \in \mathbb{D}_T$, $P_{\Lambda_1}$ automatically has an analytic extension to $\mathbb{D}_T$. 

One could consider an alternative notion of reducibility, which we will call \textbf{eigenvalue reducible}, where the eigenvalue function $\sigma(\widehat{T}(z))$ is partitioned into continuous collections $\Lambda^*_1(z)$, $\Lambda^*_2(z)$  that are invariant under the eigenvalue permutations induced by analytical continuation along curves. With such a definition, one can show that the associated eigenprojection $P_{\Lambda^*_1}$ as in \eqref{eqn:Plambda} should extend analytically to $\mathbb{D}_T \setminus S_{\widehat{T}}$. However, such a $P_{\Lambda^*_1}$ could have poles at the exceptional points (a phenomenon that is explored in Example \ref{ex2}). \end{remark}

We can connect projection reducibility with the existence of a nontrivial reducing subspace of $\widehat{T}(z)$ that is independent of $z$. In particular, as mentioned in Section \ref{sec:intro}, we say that $\widehat{T}$ is \textbf{subspace reducible} if there is a subspace $S$ with $ \{0\} \subsetneq S \subsetneq \mathbb{C}^n$ such that for each $z \in \mathbb{D}_T$, $S$ is invariant under both $\widehat{T}(z)$ and $\widehat{T}(z)^*$. 
In what follows, we will need the following easy and well-known geometric lemma concerning eigenvectors and reducing subspaces.

\begin{lemma}\label{lem:ortho} 
Suppose that $A \in M_n(\mathbb C)$ has distinct eigenvalues and that $A$ has a reducing subspace $S$. Then, each eigenvector of $A$ is in $S$ or $S^\perp$. 
\end{lemma}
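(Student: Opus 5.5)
The plan is to use the fact that $S$ reduces $A$, so $\mathbb{C}^n = S \oplus S^\perp$ with both summands invariant under $A$. Then we show that every eigenvector must already lie in one summand, using that each eigenspace is one-dimensional.

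First, decompose an arbitrary eigenvector according to the reducing subspace. Let $v$ be an eigenvector of $A$ with eigenvalue $\lambda$, and write $v = v_1 + v_2$ with $v_1 = P_S v \in S$ and $v_2 = P_{S^\perp} v \in S^\perp$. Since $S$ is reducing, $A$ commutes with $P_S$ (equivalently, $S$ and $S^\perp$ are both $A$-invariant). Hence
\[ A v_1 = A P_S v = P_S A v = \lambda P_S v = \lambda v_1, \]
and similarly $A v_2 = \lambda v_2$. So each nonzero component is itself an eigenvector for the same eigenvalue $\lambda$.

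Next, bring in distinctness of the eigenvalues. Because $A$ is $n\times n$ with $n$ distinct eigenvalues, each eigenspace $\ker(A-\lambda I)$ is one-dimensional. Suppose both $v_1$ and $v_2$ were nonzero. They both lie in $\ker(A-\lambda I)$, yet they are orthogonal, so they are linearly independent. That would make the eigenspace at least two-dimensional, which is a contradiction. Therefore $v_1 = 0$ or $v_2 = 0$, which means $v \in S^\perp$ or $v \in S$.

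The only step needing care is justifying $AP_S = P_S A$ from the reducing hypothesis. This is standard: invariance of $S$ under $A$ and $A^*$ gives invariance of $S$ and $S^\perp$ under $A$, and therefore $P_S A P_S = A P_S$ and $P_S A (I-P_S) = 0$. I expect no real obstacle here.
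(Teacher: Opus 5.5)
Your proof is correct and follows essentially the same route as the paper: use $AP_S = P_S A$ to see that $P_S v$ (and $P_{S^\perp} v$) lies in the one-dimensional $\lambda$-eigenspace. The paper concludes directly that $P_S v$ is either $0$ or a nonzero multiple of $v$, whereas you use the orthogonality of the two components to rule out both being nonzero. These are the same argument.
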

\begin{proof}
Suppose that $S$ is reducing for $A$. Then the orthogonal projection onto $S$, denoted $P_S$, commutes with $A$ (see e.g. \cite[II.3.7]{conway}). Now suppose that $\vec{v}$ is an eigenvector of $A$ associated to an eigenvalue $\lambda$. Then
\[A P_S \vec{v} = P_S A \vec{v} = \lambda P_S \vec{v}. \]
Hence $P_S \vec{v}$ is an eigenvector associated with $\lambda$ or $P_S \vec{v} = \vec{0}$, in which case $\vec{v} \in S^\perp$.  If $P_S \vec{v} \neq \vec{0}$, it must be that $P_S \vec{v} = \alpha \vec{v}$ for some $\alpha \in \mathbb{C} \setminus \{0\},$ as the eigenspace associated with $\lambda$ is one-dimensional, and so $\vec{v} \in S$.  Hence, each eigenvector $\vec{v}$ must lie in $S$ or $S^\perp$. 
\end{proof}
\color{black}

Now we can show that, for a fractured Nelson dilation, there is a straightforward connection between the concepts of projection reducible and subspace reducible.

\begin{theorem} \label{thm:constantPv} Let $T$ be an $n \times n$ matrix with $\| T \| <1$ and a fractured Nelson dilation $\widehat{T}$. Then $\widehat{T}$ is projection reducible if and only if $\widehat{T}$ is subspace reducible. \end{theorem}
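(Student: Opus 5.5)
We prove both implications. The backward direction is elementary. The forward direction rests on the fact that $P_{\Lambda_1}$ is constant and is an orthogonal projection; we establish this by boundary values on $\mathbb{T}$, where $\widehat{T}$ is unitary, together with the identity theorem.

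\emph{Subspace reducible $\Rightarrow$ projection reducible.} Let $S$ be a nontrivial subspace that reduces $\widehat{T}(z)$ for all $z \in \mathbb{D}_T$. With respect to $\mathbb{C}^n = S \oplus S^\perp$, each $\widehat{T}(z)$ is block diagonal, $\widehat{T}(z) = A(z) \oplus B(z)$, where $A$ and $B$ are analytic on $\mathbb{D}_T$. Put $\Lambda_1(z) = \sigma(A(z))$ and $\Lambda_2(z) = \sigma(B(z))$; these are $k$- and $(n-k)$-valued continuous functions with $k = \dim S$, and together they partition $\sigma(\widehat{T}(z))$. For $z \notin S_{\widehat{T}}$ the eigenvalues are distinct. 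By Lemma \ref{lem:ortho}, each eigenvector lies in $S$ or in $S^\perp$, and those in $S$ are exactly the eigenvectors of $A(z)$. Hence $P_{\Lambda_1}(z) = P_S$ and $P_{\Lambda_2}(z) = P_{S^\perp}$ on $\mathbb{D}_T \setminus S_{\widehat{T}}$. These are constant matrices, so they extend analytically to $\mathbb{D}_T$, and $\widehat{T}$ is projection reducible.

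\emph{Projection reducible $\Rightarrow$ subspace reducible.} Let $P = P_{\Lambda_1}$, extended analytically to $\mathbb{D}_T$. On $\mathbb{D}_T \setminus S_{\widehat{T}}$ we have $P^2 = P$, $P \widehat{T} = \widehat{T} P$, and $\operatorname{rank} P = k$. Since $S_{\widehat{T}}$ is finite, these identities persist on all of $\mathbb{D}_T$ by continuity.

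Now take $\zeta \in \mathbb{T} \setminus S_{\widehat{T}}$. Here $\widehat{T}(\zeta)$ is unitary with distinct eigenvalues, so its eigenvectors are mutually orthogonal and each $P_j(\zeta)$ is an orthogonal projection. Thus $P(\zeta)$ is self-adjoint on $\mathbb{T}$ minus finitely many points, and by continuity on all of $\mathbb{T}$. The function $z \mapsto P(z)$ is analytic, and on $\mathbb{T}$ its adjoint $P(\zeta)^*$ agrees with $P^*(1/\bar{\zeta})$, where $P^*(w) := P(\bar{w})^*$ is analytic. It follows that $P(z) = P(1/\bar z)^*$ extends $P$ to a bounded analytic function on $\mathbb{C}$.

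We expect the main obstacle to be handling $\infty$ and the annulus cleanly. One route is to note that $P(1/\bar z)^*$ is analytic on $\{|z| > 1/r\}$, where $\mathbb{D}_T = \{|z| < r\}$ with $r > 1$. This function agrees with $P$ on $\mathbb{T}$, so the identity theorem on the annulus $1/r < |z| < r$ glues them into an entire function, bounded since it is continuous at $\infty$. Liouville's theorem then makes $P$ constant.

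Simpler alternative to the Liouville step: the entries of $P(z)$ are, via \eqref{eq:langrange}, algebraic functions of $z$ that are analytic on $\mathbb{D}_T$. A matrix function analytic near $\overline{\mathbb{D}}$ and self-adjoint on $\mathbb{T}$ need not be constant in general (e.g.\ $z + \bar z$ type functions are not analytic), but an analytic matrix function whose Fourier series on $\mathbb{T}$ equals that of its adjoint must have vanishing positive-index coefficients. So $P$ is constant, which gives the conclusion directly without gluing.

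With $P$ constant, self-adjoint, idempotent, of rank $k$, and commuting with every $\widehat{T}(z)$, set $S = \operatorname{ran} P$. For each $z$, taking adjoints in $P\widehat{T}(z) = \widehat{T}(z)P$ gives $P\widehat{T}(z)^* = \widehat{T}(z)^*P$. Hence $S$ is invariant under both $\widehat{T}(z)$ and $\widehat{T}(z)^*$. Since $1 \le k \le n-1$, the subspace $S$ is nontrivial, so $\widehat{T}$ is subspace reducible.
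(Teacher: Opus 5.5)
Your proof is correct. The backward direction is essentially the paper's: Lemma \ref{lem:ortho} forces $P_{\Lambda_1}=P_S$ off $S_{\widehat{T}}$, and your choice $\Lambda_1(z)=\sigma(A(z))$ is a tidier global description of the same partition. The forward direction, however, takes a different route from the paper at two points.

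\textbf{Constancy of $P=P_{\Lambda_1}$.} The paper argues through norms:
\begin{itemize}
\item $\|P\|=1$ on $\mathbb{T}$, and $\|P\|\ge 1$ on $\mathbb{D}$ since $P$ is a nonzero idempotent;
\item a maximum norm principle for matrix-valued functions (cited from Condori) then gives $\|P\|\equiv 1$;
\item a norm-one idempotent is Hermitian, so $P$ is Hermitian-valued and analytic, hence constant.
\end{itemize}
You use only self-adjointness on $\mathbb{T}$. Comparing $P(\zeta)=\sum A_k\zeta^k$ with $P(\zeta)^*=\sum A_k^*\zeta^{-k}$ kills $A_k$ for $k\ge 1$; alternatively you reflect and apply Liouville. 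This is more elementary, needs no interior information and no cited maximum principle, and would work even with only a.e.\ self-adjointness on $\mathbb{T}$.

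\textbf{Reducibility of $S=\operatorname{ran}P$.} The paper shows invariance of $S$ via eigenvectors and continuity. It then uses unitarity on $\mathbb{T}$ to get $P_S\widehat{T}P_{S^\perp}=0$ there, and applies the maximum principle a second time. You observe that $P$ is a constant self-adjoint projection commuting with every $\widehat{T}(z)$, extending the commutation across the finite set $S_{\widehat{T}}$ by continuity. That gives reducibility in one line, so your version is shorter and more self-contained.

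\textbf{Expository slips to fix.} Neither affects validity.
\begin{itemize}
\item With $P^*(w):=P(\bar w)^*$ one has $P(\zeta)^*=P^*(1/\zeta)$, not $P^*(1/\bar\zeta)$; the latter equals $P(\bar\zeta)^*$ on $\mathbb{T}$. Your next sentence does use the correct function $P(1/\bar z)^*$.
\item The clause saying a matrix function analytic near $\overline{\mathbb{D}}$ and self-adjoint on $\mathbb{T}$ ``need not be constant'' is false. Such a function is always constant, which is exactly what your Fourier argument proves, so that sentence should be deleted.
\item The remark that the entries are algebraic via \eqref{eq:langrange} is irrelevant and can also go.
\end{itemize}
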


\begin{proof} First assume $\widehat{T}$ is projection reducible. We will show that $\widehat{T}$ is subspace reducible. Let $\Lambda_1$ and $\Lambda_2$ be the associated partition functions of $\sigma(\widehat{T}(z))$.  Then $P_{\Lambda_1}$ as in \eqref{eqn:Plambda} extends to be analytic on $\mathbb{D}_T$ and so, $P_{\Lambda_1}$ extends to be bounded and analytic on a neighborhood of $\overline{\mathbb{D}}$. By construction, $\widehat{T}$ is unitary-valued on $\mathbb{T}$, so $\| P_{\Lambda_1}\| =1$ on $\mathbb{T}$. Meanwhile, since $P_{\Lambda_1}$ is projection-valued, $\|P_{\Lambda_1}(z) \| \ge 1$ for $z \in \mathbb{D}$. Then $P_{\Lambda_1}$ must attain its maximum norm on $\overline{\mathbb{D}}$ on the interior of the disk, so the maximum norm principle implies that $\|P_{\Lambda_1}\| \equiv 1$ on $\overline{\mathbb{D}}$, see e.g. \cite[Theorem 2]{condori}. 

A standard linear algebra fact says that if an $n \times n$ matrix $P$ is idempotent with $\| P\| =1$, then $P$ is Hermitian \cite[Proposition II.3.3]{conway}. Thus, $P_{\Lambda_1}(z)$ is Hermitian-valued on $\mathbb{D}$. Since the entries of $P_{\Lambda_1}$ are analytic on $\mathbb{D}_T$, this implies that they must be constant and so, $P_{\Lambda_1}$ is constant on $\mathbb{D}_T$.

Now, let $S \subseteq \mathbb{C}^n$ be the range of $P_{\Lambda_1}$. Then $S$ is invariant under $\widehat{T}(z)$ for each $z\in \mathbb{D}_T$. Indeed, if $z \in \mathbb{D}_T \setminus S_{\widehat{T}}$, then $S$ is the linear span of a finite set of eigenvectors of $\widehat{T}(z)$ and so, is clearly invariant under $\widehat{T}(z)$. Meanwhile if $z \in S_{\widehat{T}}$ and $\vec{x} \in S$, there is a sequence $(z_n)  \subseteq \mathbb{D}_T\setminus S_{\widehat{T}}$ with $(z_n) \rightarrow z$. Then each $\widehat{T}(z_n)\vec{x} \in S$ and so the continuity of $\widehat{T}$ paired with the fact that $S$ is closed in $\mathbb{C}^n$ implies $\widehat{T}(z)\vec{x} \in S$. 

Now if $z\in \mathbb{T}$, then $\widehat{T}(z)$ is unitary and so
\[ \widehat{T}(z) S \subset S \text{ implies that }  \widehat{T}(z) S= S,\]
and so $\widehat{T}(z)^* S =S$ as well. This implies that $P_{S} \widehat{T}(z) P_{S^\perp} =0$ for $z\in \mathbb{T}$, where $P_S, P_{S^{\perp}}$ denote the orthogonal projections onto $S, S^{\perp}$ respectively.  But by analyticity and the maximum principle, this immediately implies that $P_{S} \widehat{T}(z)P_{S^\perp} =0$ for all $z\in \mathbb{D}$ (and hence for all $z\in \mathbb{D}_T$) as well and so, each $\widehat{T}(z)^* S \subseteq S$. Thus, $S$ is an invariant subspace of both $\widehat{T}(z)$ and $\widehat{T}(z)^*$ for all $z\in \mathbb{D}_T$, so $\widehat{T}$ is subspace reducible.

For the converse direction, assume that $\widehat{T}$ has a reducing subspace $S$ with $\dim S =k$ for some $k \in \{1, \dots, n-1\}$. Then $S$ and  $S^{\perp}$ are invariant subspaces of $\widehat{T}(z)$ for all $z$ in $\mathbb{D}_T$. Fix $z \in \mathbb{D}_T \setminus S_{\widehat{T}}$. Then 
$\widehat{T}(z)$ has distinct eigenvalues as in \eqref{eqn:eigenvalues}. Let $\vec{v}_1(z), \dots, \vec{v}_n(z)$ be the associated eigenvectors. Then by Lemma \ref{lem:ortho}, since $\widehat{T}(z)$ has $n$ distinct eigenvalues and $S$ is reducing for $\widehat{T}(z)$, each $\vec{v}_i(z)$ must be in $S$ or $S^{\perp}$. Since $\dim S = k$, $S$ must be the span of $k$ eigenvectors of $\widehat{T}(z)$ and $S^\perp$ must be the span of the other $(n-k)$ eigenvectors of $\widehat{T}(z)$. Reordering if necessary, we can write $S = \text{span} \{  \vec{v}_1(z), \dots, \vec{v}_k(z) \}$ and $S^{\perp} = \text{span} \{  \vec{v}_{k+1}(z), \dots, \vec{v}_n(z) \}$. Define $\Lambda_1(z) = \{ \lambda_1(z), \dots, \lambda_k(z)\}$ and $\Lambda_2(z) =   \{ \lambda_{k+1}(z), \dots, \lambda_n(z)\}$. Then $\Lambda_1, \Lambda_2$ are defined on $\mathbb{D}_T \setminus S_{\widehat{T}}$ and we can define them on the finite set of points $z \in S_{\widehat{T}}$ in any way that partitions $\sigma(\widehat{T}(z))$. Let $P_S$ be the orthogonal projection onto $S$. Then 
 for all $z \in \mathbb{D}_T \setminus S_{\widehat{T}}$, $P_{\Lambda_1}(z)$ from \eqref{eqn:Plambda} equals $P_S$.  Thus, we can trivially extend $P_{\Lambda_1}$  analytically to $\mathbb{D}_T$ by defining it to be $P_S$ at each point. Thus, $\widehat{T}$ is projection reducible. 
 \end{proof} 

The following result is an immediate corollary of Theorem \ref{thm:constantPv} and Remark \ref{rem:pr}.

\begin{theorem*} \ref{thm:entangled}. Let $T$ be an $n\times n$ matrix with $\| T \| <1$ and fractured Nelson dilation $\widehat{T}$. Then at least one of the following must occur: $\widehat{T}$ is eigenvalue entangled or $\widehat{T}$ is subspace reducible.
\end{theorem*}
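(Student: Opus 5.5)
The plan is to argue by dichotomy, using Remark \ref{rem:pr} and Theorem \ref{thm:constantPv} as the two key inputs. Suppose $\widehat{T}$ is \emph{not} eigenvalue entangled; we will show that it must then be subspace reducible.

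By definition, failure of (i) means $\widehat{T}$ is totally eigenvalue reducible. So there is a nontrivial partition of $\sigma(\widehat{T}(z))$ into a continuous $k$-valued function $\Lambda_1(z)$ and a continuous $(n-k)$-valued function $\Lambda_2(z)$, with $1\le k\le n-1$ and $\Lambda_1(z)\cap\Lambda_2(z)=\emptyset$ for every $z\in\mathbb{D}_T$.

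The first step is to invoke the argument of Remark \ref{rem:pr} to conclude that $\widehat{T}$ is projection reducible. The only content to verify there is local. Near each $z_0\in\mathbb{D}_T$, continuity and disjointness give a contour $\Gamma_{z_0}$ separating $\Lambda_1(z)$ from $\Lambda_2(z)$ for all $z$ near $z_0$. The Riesz integral \eqref{eqn:Pgamma} over $\Gamma_{z_0}$ then gives an analytic expression for $P_{\Lambda_1}$ near $z_0$. On $\mathbb{D}_T\setminus S_{\widehat{T}}$ this expression agrees with the definition \eqref{eqn:Plambda}, so these local expressions patch together into an analytic extension of $P_{\Lambda_1}$ to all of $\mathbb{D}_T$. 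The same holds for $P_{\Lambda_2}$, or one can simply use $P_{\Lambda_2}=I-P_{\Lambda_1}$.

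The second step is to apply Theorem \ref{thm:constantPv}. Since $\widehat{T}$ is a fractured Nelson dilation and is projection reducible, it is subspace reducible, which is alternative (ii). Hence at least one of (i) and (ii) always holds.

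The main point needing care is the consistency of the local contour formulas in the first step: two overlapping neighborhoods might use different contours. I would handle this by noting that on the dense set $\mathbb{D}_T\setminus S_{\widehat{T}}$, which is the complement of a finite set, both formulas equal the sum of eigenprojections for $\Lambda_1(z)$. By the identity theorem they therefore coincide on overlaps. Everything else is a direct citation of earlier results.
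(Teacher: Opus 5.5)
Your proposal is correct and follows the paper's own proof: the negation of (i) is total eigenvalue reducibility, Remark \ref{rem:pr} upgrades this to projection reducibility, and Theorem \ref{thm:constantPv} then gives subspace reducibility. Your extra care about patching the local contour formulas is fine. The identity theorem is not even needed, since each local Riesz integral equals the spectral projection for $\Lambda_1(z)$ at every point where its contour separates $\Lambda_1(z)$ from $\Lambda_2(z)$, so the local formulas agree pointwise on overlaps.
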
 

\begin{proof} Assume that $\widehat{T}$ is not eigenvalue entangled. Then by definition, $\widehat{T}$ is totally eigenvalue reducible and so Remark \ref{rem:pr} implies that $\widehat{T}$ is projection reducible. By Theorem \ref{thm:constantPv}, $\widehat{T}$ is subspace reducible, as needed.
\end{proof}

We end this section by considering some examples.  First let us return to the matrices $T_1$ and $T_2$ and their respective Nelson dilations from Example \ref{ex1}.

\begin{example} \label{ex:fracture2} By computing the common zero set of each characteristic polynomial $p(x,z) = \text{det}( x I - \widehat{T}_j(z))$ and its $x$-derivative,  one can check that each of $\widehat{T}_1$ and $\widehat{T}_2$ have finite exceptional sets $S_{\widehat{T}_1}$ and $S_{\widehat{T}_2}$. Thus, these are fractured Nelson dilations. Indeed, their exceptional sets inside of $\mathbb{D}$ are plotted in Figure \ref{fig:exceptional}.

\begin{figure}[!ht] 
    \subfigure[The set $S_{\widehat{T}_1} \cap \mathbb{D}$]
      {\includegraphics[width=0.45 \textwidth]{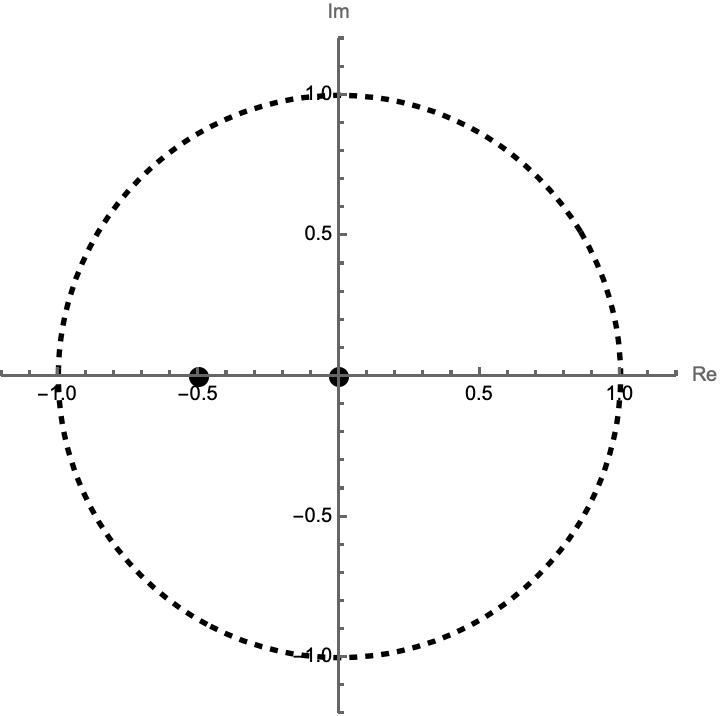}}
    \quad 
    \subfigure[The set $S_{\widehat{T}_2} \cap \mathbb{D}$]
      {\includegraphics[width=0.45 \textwidth]{t2B.png}}
  \caption{\textsl{The exceptional sets for $\widehat{T}_1$ and $\widehat{T}_2$ in $\mathbb{D}$ plotted with the unit circle.}}\label{fig:exceptional}
\end{figure}

It is also easy to check that neither $\widehat{T}_1(z)$ nor $\widehat{T}_2(z)$ have a common reducing subspace for all $z \in \mathbb{D}_T$. Indeed, if such a subspace $S$ existed for $\widehat{T}_j(z)$, Lemma \ref{lem:ortho} would imply that the eigenvectors of $\widehat{T}_j(z)$ would partition into two nontrivial collections (based on whether each is in $S$ or $S^\perp$) that are necessarily orthogonal. By checking specific $z$ values (e.g. $z=1/2$ for $\widehat{T}_1$ and $z=0$ for $\widehat{T}_2$), one can see that there are $z$-values where $\widehat{T}_j(z)$ does not have a nontrivial reducing subspace. Thus, both $\widehat{T}_1$ and $\widehat{T}_2$ fall into the settings of Theorem \ref{thm:constantPv} and Theorem \ref{thm:entangled}: they must be both projection irreducible and eigenvalue entangled. 
\end{example}

Now, let us explore the three notions of eigenvalue/eigenprojection reducibility from Remark \ref{rem:pr}. Here, we note that \textbf{projection reducible} does not imply \textbf{totally eigenvalue reducible}, even for fractured Nelson dilations. 

\begin{example} \label{ex:both} Specifically, consider
\begin{equation} \label{eqn:bothT}  \widehat{T}(z) = \begin{bmatrix} z & 0 \\ 0 & i z \end{bmatrix}.\end{equation}
Then $\widehat{T}$ is clearly a Nelson dilation of the zero matrix and is fractured because it has distinct eigenvalues for all $z \ne 0$. 
However, since $\widehat{T}$ is $2 \times 2$,  the repeated eigenvalue at $z=0$ means $\widehat{T}$ is not totally eigenvalue reducible. However,  $S = \text{span}\{ \vec{e}_1\}$ is a nontrivial reducing subspace for all $\widehat{T}(z)$ and so Theorem \ref{thm:constantPv} implies that $\widehat{T}$ is projection reducible (a fact that could also be proved directly).

This shows that the two properties in Theorem \ref{thm:entangled}-- eigenvalue entangled and subspace reducible-- are not mutually exclusive; there are fractured Nelson dilations $\widehat{T}$ like \eqref{eqn:bothT}  that satisfy both conditions.
\end{example}

Meanwhile, the following example shows that \textbf{eigenvalue reducible} does not imply \textbf{projection reducible}, even for fractured Nelson dilations. Thus, to get the biconditional statement in Theorem \ref{thm:constantPv}, we actually needed to use the notion of projection reducible instead of either of the other definitions from Remark \ref{rem:pr}.

\begin{example}\label{ex2}

Consider the function
  \[\widehat{T}(z) = \begin{bmatrix} 0& 0& 0& b(z) \\ z &0 &0 &0 \\ 0 & z & 0 & 0 \\ 0 & 0 & b(z) & 0 \end{bmatrix}, \]
 where $\displaystyle b(z) = \frac{z-\alpha}{1 - \overline{\alpha} z}$ for $0 < |\alpha| < 1$.  Then $\widehat{T}$ is a Nelson dilation as
 \[\widehat{T}(z) = \begin{bmatrix} 0&0&0&1 \\ 1 &0&0&0 \\ 0&1&0&0 \\ 0&0&1&0 \end{bmatrix} \begin{bmatrix} z &0&0&0 \\ 0&z&0&0 \\ 0&0&b(z)&0 \\ 0 &0&0&b(z) \end{bmatrix} I_4. \]
 Leveraging the structure of characteristic polynomials of permutation matrices, we compute 
 \[\det (\widehat{T}(z) - \lambda I) = (\lambda^2 - z b(z))(\lambda^2 + z b(z)). \]
Parametrize the local branches of the eigenvalue functions via $\lambda_1, \lambda_2 = \pm \sqrt{z b(z)}$ and $\lambda_3, \lambda_4 = \pm i \sqrt{z b(z)}.$  The eigenvalues thus form globally defined analytic 2-valued functions. Hence this $\widehat{T}$ is eigenvalue reducible. We can compute $P_{\{1,2\}}$ using the Lagrange-type formula in \eqref{eq:langrange} away from $z = 0$ and $z = \alpha$ as the eigenvalues are distinct. Then
\[P_{\{1,2\}}(z) = \frac{1}{2} \begin{bmatrix} 1 & 0 & \frac{b(z)}{z} & 0 \\ 0 & 1 & 0 & 1 \\ \frac{z}{b(z)} & 0 & 1 & 0 \\ 0 & 1 & 0 & 1 \end{bmatrix}.\]
So $P_{\{1,2\}}$ is analytic off the exceptional set, but unbounded at poles at $z = 0$ and $z = \alpha$, precisely in line with the discussion in Kato \cite[Chapter 2]{kato}.

We can also check directly that $\widehat{T}(z)$ is subspace irreducible. To see this, choose some $w \in \D$ so that $w \neq 0$, $b(w)\neq 0$ and $|w| \neq |b(w)|$. We will abuse notation for clarity and write $b(w) = b$ in the discussion below. For a fixed $w$, we have that the eigenvalues of $\widehat{T}(w)$ are the roots of the equation $\lambda^4 = w^2 b^2$. For each such $\lambda$, a corresponding eigenvector is given by 
\[ \vec{v}_\lambda = \begin{bmatrix}
  b/\lambda \\ wb/\lambda^2 \\ w^2 b / \lambda^3 \\ 1
\end{bmatrix}.\]

A straightforward calculation shows that when $\lambda, \mu$ are distinct,  $\vec{v}_\lambda \perp \vec{v}_\mu$ only when $|w| = |b|$, which violates our choice of $w$. Hence, the eigenvectors cannot be arranged into mutually orthogonal subcollections. Now observe that if $\widehat{T}(w)$ has a reducing subspace $S$, by Lemma \ref{lem:ortho} it must be that  $S$ is spanned by some subcollection of eigenvectors and $S^\perp$ must be spanned by the rest of them, which require two subcollections of mutually orthogonal eigenvectors. As this is impossible, $\widehat{T}(w)$ does not have a reducing subspace, and hence $\widehat{T}$ is not subspace reducible.  Then by Theorem \ref{thm:constantPv}, $\widehat{T}$ is projection irreducible.
\end{example}
\color{black}
\section{Diagonal Spectral Fracturing} \label{sec:fracturing}

In this section, we establish the key matrix analysis fact used in the proof of Theorem \ref{thm:fractured}, along with several related results. 

To start, let $D = D(d_1, \dots, d_n)$ be an $n\times n$ diagonal matrix with variables $d_1, \dots, d_n$ along the diagonal.  Let $M$ be a fixed $n \times n$ matrix and define the ($n+1$)-variable volume polynomial $V_M$ by 
\begin{equation} \label{eqn:VM} V_M(x, \vec{d}) = \det ( x I - M D),\end{equation}
where $I$ is always assumed to be of the appropriate size and $\vec{d}$ denotes $(d_1, \dots, d_n)$.  Then $V_M$ has degree $n$ in the variable $x$. Since each $d_j$ appears in exactly one column of the matrix $x I - M D$, $V_M$ has  degree at most $1$ in each $d_j$. When $x=0$, $V_M(0, \vec{d}) = (-1)^n( d_1 \cdots d_n) \det M$ and if we assume that $M$ is invertible (so $\det M \ne 0$), then $V_M$ must be exactly degree $1$ in each $d_j$.

Recall that a polynomial is \textbf{reducible} if it can be factored into a product of two nonconstant polynomials. In the following lemma we show that if $\det M \ne 0$ and if $V_M$ is reducible, then its factors must have a very specific form.

\begin{lemma} \label{lem:Vfact} Let $M$ be an invertible $n \times n$ matrix and assume that $V_M$ from \eqref{eqn:VM} is reducible. Then $V_M$ can be factored as 
\[ V_M(x, \vec{d}) = \det (x I- M_1 D_1) \det(x I- M_2 D_2),\]
where for some integer $\ell$ with $1 \le \ell \le n -1$, $M_1$ is an
$\ell \times \ell$ invertible matrix, $M_2$ is an $(n-\ell) \times (n-\ell)$ invertible matrix, $D_1$ is an $\ell \times \ell$ diagonal matrix, and $D_2$ is a $(n -\ell)\times (n-\ell)$ diagonal matrix. Moreover, the diagonal entries of $D_1$ and $D_2$ are exactly composed of the variables $d_1, \dots, d_n$.  
\end{lemma}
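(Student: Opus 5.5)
Suppose $V_M = F\cdot G$ with $F,G$ nonconstant. The plan is to exploit multilinearity in the $d_j$. First I pin down the variables. Since $V_M$ has degree exactly $1$ in each $d_j$, each $d_j$ appears in exactly one of $F$, $G$, and there to degree $1$. The $x$-degrees of $F$ and $G$ add to $n$.

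Setting $x=0$ gives $F(0,\vec d)G(0,\vec d)=(-1)^n\det M\, d_1\cdots d_n$, a nonzero monomial. Hence $F(0,\cdot)$ and $G(0,\cdot)$ are nonzero monomials. This shows that every $d_j$ genuinely occurs, and the variables split into disjoint sets $J_1$ (those in $F$) and $J_2$ (those in $G$).

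Setting $\vec d=0$ gives $V_M(x,0)=x^n$, so $F(x,0)$ and $G(x,0)$ are, up to scalars, $x^{\ell}$ and $x^{n-\ell}$. Normalizing, I may take $F$ and $G$ monic in $x$ of degrees $\ell$ and $n-\ell$, since the leading $x$-coefficients multiply to $1$ and are constant. Next I want $\ell=|J_1|$. The homogeneity that should force this is: $V_M$ is homogeneous in the grading $\deg x=\deg d_j=1$, since $\det(xI-MD)$ scales by $t^n$ under $(x,\vec d)\mapsto (tx,t\vec d)$. Factors of homogeneous polynomials are homogeneous, so $F$ is homogeneous of degree $\ell$. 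Its $x=0$ part is the monomial $\prod_{j\in J_1} d_j$ of degree $|J_1|$, so $\ell=|J_1|$. In particular $1\le \ell\le n-1$: if $J_1$ were empty, $F$ would be a homogeneous degree-$0$ polynomial, i.e.\ a constant.

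Now I identify $F$ as a determinant; this is the main step. After permuting coordinates (conjugating $M$ and $D$ simultaneously by a permutation matrix, which preserves $V_M$), I may take $J_1=\{1,\dots,\ell\}$. I then set $d_j=0$ for $j\in J_2$. Then $MD$ has zero columns in the positions $J_2$, so
\[\det(xI-MD)=x^{n-\ell}\det(xI-M_{11}D_1),\]
where $M_{11}$ is the top-left $\ell\times\ell$ block. On the other side, $G$ specialized at $d_j=0$ for $j\in J_2$ becomes $x^{n-\ell}$, by homogeneity of $G$ together with its lack of $J_1$ variables. Since $F$ does not involve the $J_2$ variables, it is unaffected by the specialization, and so $F=\det(xI-M_{11}D_1)$. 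Symmetrically, $G=\det(xI-M_{22}D_2)$.

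Finally, invertibility of the blocks comes from comparing the $x=0$ terms. We get $\det M_{11}\det M_{22}\prod d_j=\det M\prod d_j$, which is nonzero, so $\det M_{11}$ and $\det M_{22}$ are both nonzero. I expect the only delicate point to be justifying the homogeneity and specialization steps; both are routine.
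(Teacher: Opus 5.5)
Your proof is correct and follows essentially the same route as the paper. Multilinearity splits the $d_j$ between the two factors, and a permutation puts them in blocks. Setting one block of variables to zero then reads off $\det(xI-M_{11}D_1)$ (resp.\ $\det(xI-M_{22}D_2)$), and the $x=0$ coefficient gives $\det M_{11}\det M_{22}=\det M\neq 0$.

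The one real difference is your homogeneity step. It forces $\deg_x F=|J_1|$, so you can identify each normalized factor individually with a block determinant. The paper never shows that the $x$-degree of $p(x,0)$ equals the number of variables in $p$. It gets only the product identity, by cancelling the stray powers of $x$.
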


\begin{proof} Assume that $V_M$ is reducible. Then we can factor it as
\[ V_M(x, \vec{d}) = p(x, \vec{d}) q(x, \vec{d}),\] 
for nonconstant polynomials $p$ and $q$. Because $V_M$ has degree $1$ in each $d_i$, the polynomials $p$ and $q$ must depend on disjoint subsets of $\{ d_1, \dots, d_n\}$ and the union of those subsets must be exactly $\{ d_1, \dots, d_n\}$.

We claim that neither of those subsets can be empty. By contradiction, assume $p$ only depends on $x$. As the only term in $V_M$ that depends exclusively on $x$ is $x^n$, we have 
\[ x^n = V_M(x, 0 ) = p(x) q(x,0),\]
which implies that $p$ is of the form $c x^k$ for some $c\ne 0$ and $k >0$. Setting $x =0$ and $D =I$ gives $\det M =0$, so $M$ is not invertible, a contradiction. 

Thus, there is some integer $\ell$ with $1 \le \ell \le n -1$ and a permutation of $\{1,\dots, n\}$, which we denote $\{i_1, \dots, i_n\}$, such that $p$ depends on $\{d_{i_1}, \dots, d_{i_\ell}\}$ and $q$ depends on $\{d_{i_{\ell+1}}, \dots, d_{i_n}\}.$ Let $\tilde{D}$ be the $n\times n$ diagonal matrix with variables $d_{i_1}, \dots, d_{i_\ell}, d_{i_{\ell+1}}, \dots, d_{i_n}$ along the diagonal. Let $U$ be the $n\times n$ permutation matrix
\[ U_{jk} = \left\{ \begin{array}{ll} 1 & \text{ if } k ={i_j} \\
0 & \text{otherwise} \end{array} \right.. \]
Then $\tilde{D} = U D U^T$ and letting $\tilde{M} = U M U^T$ gives 
\begin{eqnarray}  \nonumber
\det (x I - \tilde{M} \tilde{D}) &= \det (x I - U M U^T U D U^T) = \det (x I - MD) \\ &= p(x, d_{i_1}, \dots, d_{i_\ell}) q(x, d_{i_{\ell+1}}, \dots, d_{i_n}). \label{eqn:VMfact}
\end{eqnarray}
Write $\tilde{M}$ as a block matrix in the form 
\[ \tilde{M} = \begin{bmatrix} M_1 & * \\
* & M_2 \end{bmatrix},\]
where $M_1$ is $\ell \times \ell$ and $M_2$ is $(n-\ell) \times (n-\ell)$. Let $D_1$ be the $\ell \times \ell$ diagonal matrix with entries $d_{i_1}, \dots d_{i_\ell}$ and $D_2$ be the $(n-\ell) \times (n-\ell)$ diagonal matrix with entries $d_{i_{\ell+1}}, \dots d_{i_n}.$ Note that setting $\vec{d} =0$ implies that 
\[ x^n = p(x,0) q(x,0),\]
and so, we can assume that $p(x,0)= x^k$ and $q(x,0) = x^{n-k}$ for some nonnegative integer $k$.  Then setting $d_{i_1}, \dots, d_{i_\ell} =0$ in \eqref{eqn:VMfact} gives
\begin{equation} \label{eqn:Vfact1} x^\ell \det (x I - M_2 D_2) = p(x,0) q(x, d_{i_{\ell+1}}, \dots, d_{i_n}) =x^kq(x, d_{i_{\ell+1}}, \dots, d_{i_n}). \end{equation}
Similarly, setting $d_{i_{\ell+1}}, \dots, d_{i_n}=0$ in \eqref{eqn:VMfact} gives
\begin{equation} \label{eqn:Vfact2} x^{n-\ell} \det (x I - M_1 D_1) = p(x, d_{i_1}, \dots, d_{i_\ell}) q(x, 0) =x^{n-k} p(x, d_{i_1}, \dots, d_{i_\ell}). \end{equation}
For $x \ne 0$, combining \eqref{eqn:Vfact1} and \eqref{eqn:Vfact2} with \eqref{eqn:VMfact} gives 
\begin{eqnarray}\nonumber
 \det (x I &-& MD) = p(x, d_{i_1}, \dots, d_{i_\ell}) q(x, d_{i_{\ell+1}}, \dots, d_{i_n}) \\
 & =& \frac{x^{n-\ell} \det (x I - M_1 D_1) }{x^{n-k} } \frac{ x^\ell \det (x I- M_2 D_2)}{x^k} =  \det (x I- M_1 D_1) \det (x I - M_2 D_2). \label{eqn:Vfact3} 
\end{eqnarray}
Since both sides of \eqref{eqn:Vfact3} are polynomials, the formula clearly holds when $x=0$ as well. Lastly, substituting $D=I$ and $x=0$ into \eqref{eqn:Vfact3} gives $\det M =  \det M_1 \det M_2$, so $M_1$  and $M_2$ must both be invertible.  
\end{proof}

Lemma \ref{lem:Vfact} tells us something about the allowable roots of $V_M$ or equivalently, the eigenvalues of $MD$.

\begin{theorem} \label{thm:Vroots}  Let $M$ be an invertible $n\times n$ matrix. For almost every $\vec{d} \in \mathbb{C}^n$, the polynomial $V_M(\cdot, \vec{d})$ has distinct zeros. 
\end{theorem}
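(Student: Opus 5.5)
The plan is to reduce the statement to showing that a single polynomial in $\vec{d}$ is not identically zero. Let $\Delta(\vec{d})$ denote the discriminant of $V_M(\cdot,\vec{d})$ with respect to $x$. Since $V_M$ is monic of degree $n$ in $x$ with coefficients in $\mathbb{C}[\vec{d}]$, $\Delta$ is a polynomial in $d_1,\dots,d_n$. Moreover, for a fixed $\vec{d}$, the polynomial $V_M(\cdot,\vec{d})$ has a repeated zero exactly when $\Delta(\vec{d})=0$. The zero set of a nonzero polynomial on $\mathbb{C}^n$ has Lebesgue measure zero. So it suffices to show $\Delta \not\equiv 0$.

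Suppose, for contradiction, that $\Delta\equiv 0$. Then $V_M$, viewed as a polynomial in $x$ over the field $\mathbb{C}(\vec{d})$, has a repeated root. Hence it has a square factor in $\mathbb{C}(\vec{d})[x]$. Because $V_M$ is monic in $x$, Gauss's lemma lets us take this factor in $\mathbb{C}[\vec{d}][x]$. We obtain a factorization $V_M = p^2 q$ with $p, q \in \mathbb{C}[x,\vec{d}]$ and $p$ of positive degree in $x$.

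Next I use the degree structure recorded before Lemma \ref{lem:Vfact}: $V_M$ has degree exactly $1$ in each $d_j$, since $M$ is invertible. Degrees add under multiplication in each variable separately. If $p$ involved some $d_j$, then $p^2 q$ would have degree at least $2$ in $d_j$, which is impossible. So $p=p(x)$ depends only on $x$. Setting $\vec{d}=0$ gives $x^n = V_M(x,0) = p(x)^2 q(x,0)$, so $p(x)=c x^k$ with $c\neq 0$ and $k\ge 1$. Hence $x$ divides $V_M(x,\vec{d})$, so $V_M(0,\vec{d})\equiv 0$. But $V_M(0,\vec{d}) = (-1)^n d_1\cdots d_n \det M$, which is nonzero because $\det M\neq 0$. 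This contradiction shows $\Delta\not\equiv 0$, which proves the theorem.

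The only delicate step is the passage from ``$\Delta\equiv 0$'' to a genuine polynomial square factor $p^2$. This goes through the standard facts that a vanishing discriminant over a field of characteristic zero means a repeated irreducible factor, and that Gauss's lemma applies to the monic polynomial $V_M$ over the UFD $\mathbb{C}[\vec{d}]$. Once this is in place, the degree-one-in-each-$d_j$ observation finishes the argument. It is the same mechanism as in the proof of Lemma \ref{lem:Vfact}, though that lemma itself is not strictly needed here.
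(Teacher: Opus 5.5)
Your proof is correct, and it takes a more direct route than the paper's. Both arguments open the same way. The paper uses the resultant of $V_M$ and $\partial V_M/\partial x$ with respect to $x$, which for monic $V_M$ is the discriminant up to sign, together with the fact that a polynomial in $\vec{d}$ vanishing on a set of positive measure is identically zero.

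From there the two proofs diverge. The paper applies Lemma \ref{lem:Vfact} iteratively to write $V_M=\prod_k \det(xI-M_kD_k)$ as a product of irreducible volume polynomials of smaller invertible matrices in disjoint sets of variables. It then identifies the common factor $q$ of $V_M$ and $\partial V_M/\partial x$ with one of these factors and derives the contradiction $q \mid \partial q/\partial x$.

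You instead turn the vanishing discriminant into a square factor $p^2$ of $V_M$ in $\mathbb{C}[x,\vec{d}]$. You then rule it out using only the two observations that already drive the proof of Lemma \ref{lem:Vfact}:
\begin{itemize}
\item degree at most one in each $d_j$ forces $p$ to be free of $\vec{d}$;
\item $V_M(x,0)=x^n$ together with $V_M(0,\vec{d})=(-1)^n\det M\, d_1\cdots d_n\not\equiv 0$ excludes a nonconstant $p(x)$.
\end{itemize}
Your Gauss's lemma step is sound as stated. A monic polynomial is primitive, so its irreducible factors in $\mathbb{C}[\vec d][x]$ have positive $x$-degree and stay irreducible and pairwise non-associate over $\mathbb{C}(\vec d)$.

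Your route buys brevity. It needs neither the iteration nor the block-matrix identification of the factors, and it carries over verbatim to Theorem \ref{thm:Vroots2} given the same fact about positive-measure subsets of $\mathbb{T}^n$. The paper's route buys a structural factorization of $V_M$ into volume polynomials of smaller invertible matrices. That factorization is of independent interest (it motivates the paper's third open question), although the theorem itself does not require it.
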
 

\begin{proof} By applying Lemma \ref{lem:Vfact} iteratively, we can write 
\begin{equation} \label{eqn:VMirr} V_M(x, \vec{d}) = \prod_{k=1}^K \det (x I - M_k D_k), \end{equation}
for invertible matrices $M_k$ whose sizes add to $n$, diagonal matrices $D_k$  whose entries combine to give the variables $d_1, \dots, d_n$, and some $K \le n$. We can further assume that each $\det (x I - M_k D_k)$ is irreducible.

Proceeding by contradiction, assume that there is a set of positive measure $\mathcal{U} \subseteq \mathbb{C}^n$ such that for each $\vec{d}_0 \in \mathcal{U}$, $V_M(\cdot, \vec{d}_0)$ has a double root at $x_0$ for some $x_0 \in \mathbb{C}$. This implies that 
\begin{equation} \label{eqn:tildeV}  \tilde{V}_M (x, \vec{d}) := \frac{\partial V_M}{\partial x}(x, \vec{d})  = \sum_{j=1}^K \frac{\partial}{\partial x}\Big( \det ( x I - M_j D_j) \Big ) \prod_{k \ne j} \det (x I - M_k D_k)\end{equation}
also vanishes at each $(x_0, \vec{d}_0)$. 

Let $p$ be the resultant of $V_M$ and $\tilde{V}_M$ with respect to $x$. Note that our assumption about repeated zeros forces $n \ge 2$  and thus, $V_M$ and $\tilde{V}_M$ have positive degree in $x$. Then we are free to apply standard facts about resultants of multivariate polynomials, see Chapter 3, Section 6 in the textbook \cite{cox07} and in particular Proposition 1 on page 163. Specifically, the resultant $p$ must be in both the ideal  $\langle V_M, \tilde{V}_M \rangle$ and the polynomial ring $\mathbb{C}[d_1, \dots, d_n].$ 
Thus, for all $\vec{d}_0 \in \mathcal{U}$, since there is some  $x_0 \in \mathbb{C}$ with $V_M(x_0, \vec{d}_0) =0= \tilde{V}_M(x_0, \vec{d}_0),$  we must have $p(\vec{d}_0) =0$ as well.
Since $\mathcal{U}$ is a set of positive measure in $\mathbb{C}^n$, this implies that $p \equiv 0$ and thus, $V_M$ and $\tilde{V}_M$ have a common factor $q$ with positive degree in $x$.

By considering one of its factors if necessary, we can assume that $q$ is irreducible. \color{black} Because we have factored $V_M$ into irreducible factors, we can further assume that
\[ q(x, \vec{d}) =\det (x I  -M_{j_0} D_{j_0})\]
for some $j_0$ with $ 1 \le j_0 \le K$. Then $ \det (x  I -M_{j_0} D_{j_0})$ must be a factor of $\tilde{V}_M$ as given in \eqref{eqn:tildeV} and in particular, it must be a factor of the specific term in the sum
\[ r(x, \vec{d}):=\frac{\partial}{\partial x}\Big( \det ( x I - M_{j_0} D_{j_0}) \Big ) \prod_{k \ne j_0} \det (x I - M_k D_k) = \frac{\partial q}{\partial x}( x, \vec{d}) \prod_{k \ne j_0} \det (x I - M_k D_k). \]
Since all of the factors in \eqref{eqn:VMirr} are irreducible and distinct (since they depend on different sub-collections of $\{d_1, \dots, d_n\}$), $q$ cannot divide any polynomial of the form $\det (x I - M_k D_k)$ with $k \ne j_0$. Thus, $q$ must divide $ \frac{\partial q}{\partial x}$, which contradicts the fact that $q$ has positive degree in $x$ and finishes the proof. 
\end{proof}

The following is an immediate corollary of Theorem \ref{thm:Vroots}.

\begin{corollary} \label{cor:MD}  Let $M$ be an invertible $n\times n$ matrix.  Then for almost every diagonal matrix $D$, $MD$ has distinct eigenvalues. 
\end{corollary}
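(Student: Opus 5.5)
The plan is to identify the set of diagonal matrices $D = D(d_1,\dots,d_n)$ with $\vec{d} = (d_1,\dots,d_n) \in \mathbb{C}^n$, so that ``almost every diagonal matrix'' refers to Lebesgue measure on $\mathbb{C}^n \cong \mathbb{R}^{2n}$. With this identification the corollary is essentially a restatement of Theorem \ref{thm:Vroots}. By definition \eqref{eqn:VM}, for each fixed $\vec{d}_0 \in \mathbb{C}^n$, the one-variable polynomial $V_M(\cdot, \vec{d}_0) = \det(xI - MD(\vec{d}_0))$ is exactly the characteristic polynomial of the matrix $MD(\vec{d}_0)$.

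The eigenvalues of $MD(\vec{d}_0)$ are precisely the zeros of $V_M(\cdot,\vec{d}_0)$, counted with algebraic multiplicity. Hence $MD(\vec{d}_0)$ has $n$ distinct eigenvalues if and only if $V_M(\cdot,\vec{d}_0)$ has $n$ distinct zeros. This polynomial is monic of degree $n$ in $x$, so it does have $n$ zeros with multiplicity.

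Theorem \ref{thm:Vroots} says the set of $\vec{d}$ for which $V_M(\cdot,\vec{d})$ has a repeated zero has measure zero in $\mathbb{C}^n$. Therefore the set of diagonal $D$ for which $MD$ has a repeated eigenvalue has measure zero, which is the claim.

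There is no real obstacle. The only point to state explicitly is the identification of diagonal matrices with $\mathbb{C}^n$ and the measure being used. One could also note that the exceptional set is the zero set of the discriminant, which is not identically zero by the theorem, so it is in fact a proper algebraic subset. This stronger form is not needed here.
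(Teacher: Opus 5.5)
Your proposal is correct and is essentially the paper's argument: the paper presents Corollary~\ref{cor:MD} as an immediate consequence of Theorem~\ref{thm:Vroots}, which is exactly the identification of diagonal matrices with $\vec{d}\in\mathbb{C}^n$ together with the observation that $V_M(\cdot,\vec{d})$ is the characteristic polynomial of $MD$. Your side remark is also valid: the discriminant in $x$ is a polynomial in $\vec{d}$, and by the theorem it is not identically zero, so the exceptional set is a proper algebraic subset of $\mathbb{C}^n$.
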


Recall that  $\mathbb{T}$ denotes the unit circle $\mathbb{T} = \{ z \in \mathbb{C}: |z| =1\}$. The following is basically a corollary of the proof of Theorem \ref{thm:Vroots}.   

\begin{theorem} \label{thm:Vroots2}   Let $M$ be an invertible $n\times n$ matrix. For almost every  $\vec{\omega} \in \mathbb{T}^n$, the polynomial $V_M(\cdot, \vec{\omega})$ has distinct zeros. 
\end{theorem}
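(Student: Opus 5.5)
The plan is to reuse the algebraic core of the proof of Theorem \ref{thm:Vroots} and change only the final measure-theoretic step. From that proof, the resultant $p(\vec{d})$ of $V_M$ and $\tilde{V}_M = \partial V_M/\partial x$ with respect to $x$ is a polynomial in $\mathbb{C}[d_1,\dots,d_n]$. The argument there showed that $p \equiv 0$ leads to a contradiction. Indeed, factor $V_M$ into irreducible factors $\det(xI - M_kD_k)$ via Lemma \ref{lem:Vfact}. These factors depend on disjoint sets of variables, so they are distinct. A common factor $q$ of $V_M$ and $\tilde V_M$ would then have to divide $\partial q/\partial x$, which is impossible. So the first step is to record, as a consequence of that proof, that $p$ is \emph{not} the zero polynomial whenever $M$ is invertible (and $n\ge 2$; for $n=1$ the statement is trivial).

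Next, whenever $V_M(\cdot,\vec{\omega})$ has a repeated zero $x_0$, the pair $(x_0,\vec{\omega})$ is a common zero of $V_M$ and $\tilde{V}_M$. Since $p \in \langle V_M, \tilde V_M\rangle$, this gives $p(\vec{\omega}) = 0$. Hence the bad set of $\vec\omega\in\mathbb{T}^n$ is contained in $Z(p)\cap \mathbb{T}^n$. The leading coefficient in $x$ of $V_M$ is $1$, so the resultant specializes correctly and no degree-drop issue arises. It therefore suffices to show that the zero set of a nonzero polynomial $p$ on $\mathbb{C}^n$ meets $\mathbb{T}^n$ in a set of measure zero with respect to Haar (arc-length product) measure.

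This last step is the main point that differs from Theorem \ref{thm:Vroots}, since $\mathbb{T}^n$ is itself a null set in $\mathbb{C}^n$. I would prove it by induction on $n$ using Fubini. Write $p(\vec{d}) = \sum_{j} c_j(d_1,\dots,d_{n-1}) d_n^j$ with some $c_{j_0}\not\equiv 0$. By induction, for almost every $(\omega_1,\dots,\omega_{n-1})\in\mathbb{T}^{n-1}$ we have $c_{j_0}(\omega_1,\dots,\omega_{n-1})\neq 0$. For each such point, $p(\omega_1,\dots,\omega_{n-1},\cdot)$ is a nonzero one-variable polynomial with finitely many zeros, which form a null set on $\mathbb{T}$. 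The base case is that a nonzero polynomial in one variable has finitely many zeros on $\mathbb{T}$. Fubini then gives measure zero. Alternatively, one can note that $p$ restricted to $\mathbb{T}^n$, written in coordinates $\omega_j = e^{i\theta_j}$, is a nonzero real-analytic function of $\vec\theta$. It is nonzero because a polynomial vanishing on $\mathbb{T}^n$ vanishes identically, as Laurent coefficients on the torus are unique. The zero set of a nonzero real-analytic function has measure zero.

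Combining these steps, $V_M(\cdot,\vec{\omega})$ has distinct zeros for all $\vec\omega\in\mathbb{T}^n\setminus Z(p)$, which is a set of full measure. This gives Theorem \ref{thm:Vroots2}, and it immediately yields part (ii) of Theorem \ref{thm:distinct}, i.e.\ Corollary \ref{cor:unitary}.
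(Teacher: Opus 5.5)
Your proposal is correct and follows the paper's proof. You reuse the resultant and irreducible-factorization argument from Theorem \ref{thm:Vroots} to get $p\not\equiv 0$, and the only new ingredient is that a nonzero polynomial vanishes on at most a null subset of $\mathbb{T}^n$; the paper gets this by transferring to $\mathbb{R}^n$ via the conformal map $\beta$, while you prove it directly on the torus by Fubini induction (or real-analyticity), which is equally valid and a bit more self-contained.
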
 

\begin{proof}  The proof is basically the same as that of Theorem \ref{thm:Vroots}. We just need the following fact:  if a polynomial $p \in \mathbb{C}[z_1, \dots, z_n]$ is zero on a set of positive measure in $\mathbb{T}^n$, then $p$ is identically zero. This is clearly true if we replace $\mathbb{T}^n$ with $\mathbb{R}^n$. However, $\mathbb{T}$ (with a point removed) and $\mathbb{R}$ are conformally equivalent via maps such as $\beta(z) =i \left(\frac{1 -z}{1+z}\right).$  Thus, one basically just needs to compose with a conformal map (and simplify) to conclude the desired fact. 

Now, assume by contradiction that there is a set of positive measure $\mathcal{U} \subseteq \mathbb{T}^n$ such that for each $\vec{\omega}_0 \in \mathcal{U}$, $V_M(\cdot, \vec{\omega}_0)$ has a double root at $x_0$ for some $x_0 \in \mathbb{C}$. Then one just follows the same proof as in Theorem \ref{thm:Vroots},  using the above fact to conclude the resultant $p$ of $V_M$ and $\tilde{V}_M$ is identically zero.
\end{proof}

Theorem \ref{thm:Vroots2} has this immediate corollary, which we used in the proof of Theorem \ref{thm:fractured}:

\begin{corollary} \label{cor:unitary}  Let $M$ be an invertible $n\times n$ matrix. For  almost every $n \times n$ diagonal unitary $\Omega$, $M \Omega$ has distinct eigenvalues. 
\end{corollary}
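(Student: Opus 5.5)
This is essentially a translation of Theorem \ref{thm:Vroots2} into matrix language, and the plan is to make that translation explicit. We identify the set of $n\times n$ diagonal unitary matrices with the torus $\mathbb{T}^n$ through the map
\[ \vec{\omega}=(\omega_1,\dots,\omega_n)\mapsto \Omega=D(\omega_1,\dots,\omega_n). \]
We take ``almost every'' to mean with respect to normalized Haar (arc-length product) measure on $\mathbb{T}^n$. This is exactly the measure used in Theorem \ref{thm:Vroots2}, and it is the Haar measure of the compact group of diagonal unitaries, so no change of measure is involved.

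Under this identification, the characteristic polynomial of $M\Omega$ is $\det(xI-M\Omega)=V_M(x,\vec{\omega})$, by the definition \eqref{eqn:VM} of the volume polynomial with $\vec d=\vec\omega$ substituted. Consequently, $M\Omega$ has distinct eigenvalues precisely when $V_M(\cdot,\vec{\omega})$ has $n$ distinct zeros. Here we use that $V_M$ is monic of degree $n$ in $x$, so its zeros, counted with multiplicity, are exactly the eigenvalues of $M\Omega$.

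It follows that the exceptional set
\[ \{\Omega : M\Omega \text{ has a repeated eigenvalue}\} \]
corresponds exactly to
\[ \{\vec\omega\in\mathbb{T}^n : V_M(\cdot,\vec\omega)\text{ has a repeated zero}\}. \]
Since $M$ is invertible, Theorem \ref{thm:Vroots2} applies and says the latter set has measure zero, which is the claim.

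The only point requiring care is measurability of the exceptional set, so that ``almost every'' is meaningful. It is the zero set on $\mathbb{T}^n$ of the discriminant (the resultant of $V_M$ and $\partial_x V_M$ in $x$), which is a polynomial in $\vec\omega$. Hence the set is closed, and in fact the proof of Theorem \ref{thm:Vroots2} already shows this polynomial is not identically zero. No real obstacle arises; all the substantive work was done in Lemma \ref{lem:Vfact} and Theorem \ref{thm:Vroots2}.
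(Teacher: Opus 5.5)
Your proposal is correct and matches the paper's route. The paper states this as an immediate corollary of Theorem~\ref{thm:Vroots2}, obtained by identifying diagonal unitaries $\Omega$ with $\vec{\omega}\in\mathbb{T}^n$ and using $\det(xI-M\Omega)=V_M(x,\vec{\omega})$. Your added measurability remark is accurate but not needed: the exceptional set is the closed zero set of the discriminant, which detects repeated roots exactly because $V_M$ is monic in $x$.
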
 

\section{Open Questions}\label{sec:open}

This investigation motivates three related open questions. First, for each $n \times n$ matrix $T$ with $\|T \| <1$, we now know that there is a Nelson dilation $\widehat{T}$ of $T$ whose exceptional set $S_{\widehat{T}}$ is finite. However, it is not clear how the sets $S_{\widehat{T}}$ behave as we increase the size of the matrices under consideration. This motivates the question:

\begin{question}  \emph{As $n \rightarrow \infty$, is there a ``law of large numbers'' that explains how  the exceptional sets $S_{\widehat{T}}$ accumulate in $\overline{\mathbb{D}}$?}  \end{question}

If the points in the exceptional sets $S_{\widehat{T}}$   occur with an absolutely continuous distribution supported randomly on the disk, then this provides an obstruction to approaching the invariant subspace problem using the tools in this paper. In particular, one cannot use Montel's theorem to continue the subspaces that exist for points on the boundary (where unitary operators occur) inside the disk. This has a similar flavor to Jentzsch's theorem, which states that the partial sums of power series have zeros that accumulate everywhere on the boundary of the disk of convergence, instead of just on the arcs where the underlying function does not analytically continue.

Secondly, in our study of fractured Nelson dilations, we noted that if $z_0 \in \mathbb{D}_T \setminus  S_{\widehat{T}}$, then there is a neighborhood $\mathcal{U}_{z_0}$ of $z_0$ where the eigenvalue function $\sigma(\widehat{T}(z))$ of $\widehat{T}(z)$ can be parameterized by analytic functions as in \eqref{eqn:eigenvalues}. While these eigenvalues functions can be analytically continued along curves $\gamma \subseteq \mathbb{D}_T \setminus  S_{\widehat{T}}$, the eigenvalue functions \eqref{eqn:eigenvalues} might permute if $\gamma$ encloses any point(s) in $S_{\widehat{T}}$.   

This is illustrated in Figure \ref{fig:branches}, which uses the fractured Nelson dilation $\widehat{T}_2$ from Examples \ref{ex1} and \ref{ex:fracture2}. On the left, we have chosen a curve $\gamma$ in $\D$ that encloses several points in $S_{\widehat{T}}$. By starting with a specific parameterized eigenvalue function  in a neighborhood of a point on $\gamma$ (say $\lambda_1(z)$ in a neighborhood of $z_0 \in \gamma$) and analytically continuing around $\gamma$ once, we end at a different eigenvalue function, say $\lambda_2(z)$ from the original parameterization of  $\sigma(\widehat{T}_2(z))$ a neighborhood of $z_0$. Continuing to analytically continue around $\gamma$ twice more, we run through the remaining branches of $\sigma(\widehat{T}_2(z))$ and end back at $\lambda_1(z).$ This path of three connected eigenvalue functions is graphed in  Figure \ref{fig:branches} on the right.
 
\begin{figure}[!ht] 
    \subfigure[A loop $\gamma$ in $\D$]
      {\includegraphics[width=0.45 \textwidth]{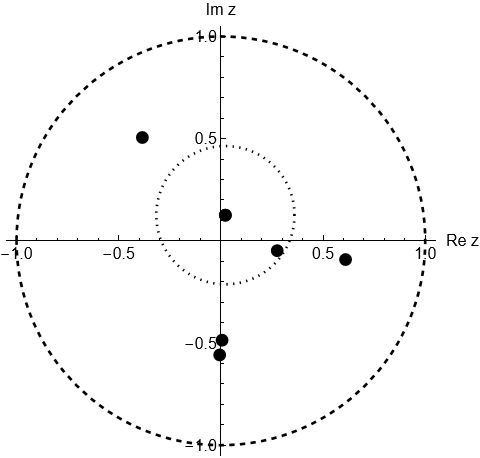}}
    \quad 
    \subfigure[Paths of the eigenvalues of $\widehat{T}_2$ on $\gamma$.]
      {\includegraphics[width=0.45 \textwidth]{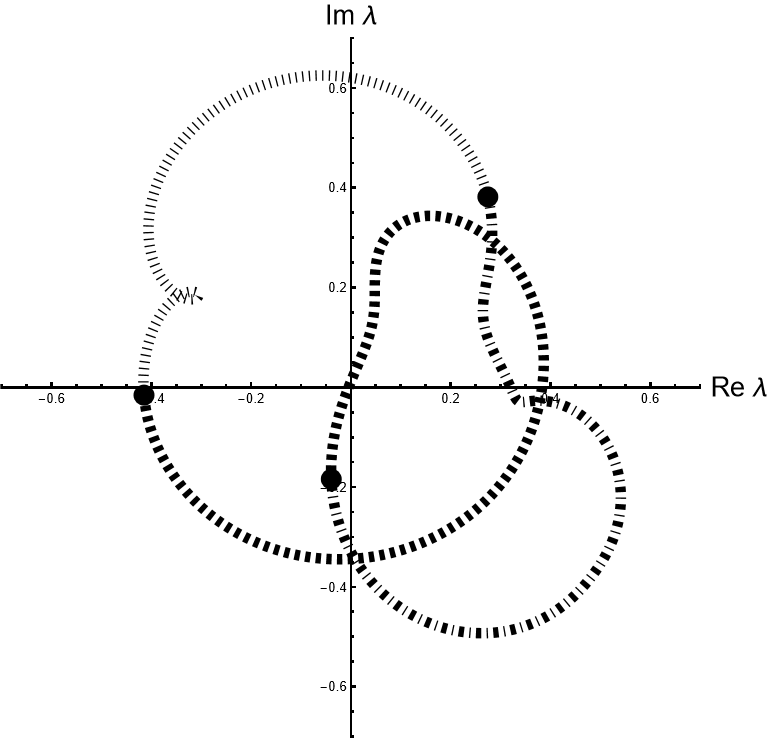}}
  \caption{\textsl{The eigenvalues of $\widehat{T_2}$ permute as we traverse $\gamma$.}}\label{fig:branches}
\end{figure}

Thus, every loop in $\D_T \backslash S_{\widehat{T}}$ can be viewed as inducing a permutation on the eigenvalues of $\widehat{T}$. This motivates the following two questions:

\begin{question}
\emph{Which subgroups of $S_n$ (the group of all permutations on $n$ symbols) can arise as monodromy groups of fractured Nelson dilations? For a generic Nelson dilation, is the monodromy group all of $S_n$?}
\end{question}

Lastly,  Lemma \ref{lem:Vfact} shows that if the volume polynomial $V_M$ from \eqref{eqn:VM} is reducible as a polynomial, then its factors are also volume polynomials of matrices that are related to the original matrix $M$. But, it is not clear whether the reducibility of $V_M$ forces $M$ to actually be reducible as a matrix.  This motivates the question:

\begin{question} \emph{For which $n \times n$ matrices $M$ does the volume polynomial $V_M$ factor as in Lemma \ref{lem:Vfact}?} \end{question}

If $M$ is reducible as an $n \times n$ matrix, then by definition, $M = U^T \tilde{M} U$, where $U$ is a permutation matrix  and $\tilde{M}$ is a block upper triangular matrix of form:
\begin{equation} \label{eqn:blockM2} \tilde{M} = \begin{bmatrix} M_1 & * \\
0 & M_2 \end{bmatrix}.\end{equation}
In this case, one can define $\tilde{D} = U D U^T= D_1 \oplus D_2$ for some diagonal matrices $D_1, D_2$ of the variables in $\vec{d}$ and as in \eqref{eqn:VMfact} and the proof of Lemma \ref{lem:Vfact},
\[ V_M(x, \vec{d}) = \det (x I - U M U^T U D U^T) = \det (x I - \tilde{M} \tilde{D}) =  \det (x I -M_1 D_1)   \det (x I -M_2 D_2).\]
Meanwhile, if $M$ is singular, then $MD$ will be singular for any choice of $\vec{d}$ and so, $V_M$ will always have a factor of $x$. (In Lemma \ref{lem:Vfact}, we required $M$ to be nonsingular to avoid this separate factor of $x$.) 

However if $M$ is irreducible and nonsingular, then the factorability of $V_M$ is not clear.   A natural concrete question is the following: Does there exist an irreducible, nonsingular $n \times n$ matrix $M$ whose volume polynomial $V_M$ is reducible?

\bibliographystyle{acm} 

\begin{thebibliography}{12}


\bibitem{CHLS12}
Man-Duen Choi, Zejun Huang, Chi-Kwong Li, and Nung-Sing Sze, 
Every invertible matrix is diagonally equivalent to a matrix with distinct eigenvalues.
\emph{Linear Algebra Appl.} \textbf{436} (2012), no. 9, 3773–3776.

\bibitem{condori}
Alberto Condori,
Maximum principles for matrix-valued analytic functions.
\emph{Amer. Math. Monthly} \textbf{127} (2020), no. 4, 331–343.

\bibitem{conway}
John B. Conway,
\emph{A Course in Functional Analysis}, 2nd ed.
Graduate Texts in Mathematics, vol. 96. Springer-Science. New York. 2007.

\bibitem{cox07}
David Cox, John Little, Donal O'Shea, 
\emph{Ideals, Varieties, and Algorithms.
An introduction to computational algebraic geometry and commutative algebra.} Third edition.
Undergrad. Texts Math.
Springer, New York, 2007.

\bibitem{FLH12}
Xin-Lei Feng,  Zhongshan Li, Ting-Zhu Huang, 
Is every nonsingular matrix diagonally equivalent to a matrix with all distinct eigenvalues?.
\emph{Linear Algebra Appl.} \textbf{436} (2012), no. 1, 120–125.

\bibitem{garciamashreghiross}
Stephan Ramon Garcia, Javad Mashreghi, and William T. Ross, \emph{Introduction to Model Spaces and their Operators},  Cambridge Studies in Advanced Mathematics, vol. 148, Cambridge University Press, Cambridge, 2016.


\bibitem{hartz} Michael Hartz, 
Von Neumann's inequality for commuting weighted shifts.
\emph{Indiana Univ. Math. J.} \textbf{66} (2017), no. 4, 1065–1079.

\bibitem{halmos} Paul Halmos,
Normal dilations and extensions of operators.
\emph{Summa Brasil. Math.}, \textbf{II} (1950) 125--134 

\bibitem{kato} Tosio Kato,
\emph{Perturbation Theory for Linear Operators.}
Classics in Mathematics. vol. 132, Springer-Verlag. Berlin. 1980.

\bibitem{mp} Scott McCullough and J. E. Pascoe,
Geometric dilations and operator annuli.
\emph{J. Funct. Anal.} \textbf{285} (2023), no. 7, Paper No. 110035, 20 pp.


\bibitem{nagy53}
Sz.-Nagy,
Sur les contractions de l'espace de Hilbert.(French)
Acta Sci. Math. (Szeged) 15 (1953), 87–92.

\bibitem{nagyfoias2010}
B\'ela  Sz.-Nagy, Ciprian Foias, Hari Bercovici, and  L\'aszl\'o K\'erchy,
\emph{Harmonic Analysis of Operators on Hilbert Space.}
Second edition. Universitext Springer, New York, 2010. 



\bibitem{nelson}  Edward Nelson,
The distinguished boundary of the unit operator ball.
\emph{Proc. Amer. Math. Soc.} \textbf{12} (1961), 994–995.

\bibitem{pall}
Gordon Pall, 
On the rational automorphs of $x^2_1+x^2_2+x^2_3$.
\emph{Ann. of Math.} (2) \textbf{41} (1940), 754--766.


\bibitem{paulsen} Vern Paulsen, \emph{Completely Bounded Maps and Operator Algebras}. 
Cambridge Stud. Adv. Math., vol 78
Cambridge University Press, Cambridge, 2002.

\bibitem{pisier} Giles Pisier, \emph{Similarity Problems and Completely Bounded Maps}, 2nd ed. Lecture Notes in Mathematics. vol 618, Springer-Verlag, Berlin, 2001.



\bibitem{shalit21} 
Orr Shalit,
Dilation theory: a guided tour. \emph{Operator theory, functional analysis and applications}, 551–623.
\emph{Oper. Theory Adv. Appl.}, \textbf{282}
Birkhäuser/Springer, Cham, 2021. 

\bibitem{schaffer55}
J. J. Sch\"affer,
On unitary dilations of contractions.
\emph{Proc. Amer. Math. Soc.} \textbf{6} (1955), 322.

\end{thebibliography}

\end{document}